\documentclass[a4]{article}
\usepackage{amsmath,  amssymb} 
\usepackage{amsthm}
\usepackage{amsfonts}
\usepackage{ascmac}

\usepackage[marginparwidth=0pt,margin=20truemm]{geometry} 

\usepackage{enumitem} 
\usepackage{graphicx}  
\usepackage{braket} 
\usepackage{bm} 
\usepackage{tikz-cd} 
\usepackage{graphicx}  
\usepackage{tikz-3dplot} 
\usepackage{comment} 

\usepackage[colorlinks=true,  linkcolor=blue,  citecolor=green]{hyperref}

\theoremstyle{definition}
\newtheorem{thm}{Theorem}[section]
\newtheorem{definition}[thm]{Definition}
\newtheorem{ex}[thm]{Example}
\newtheorem{lem}[thm]{Lemma}
\newtheorem{cor}[thm]{Corollary}
\newtheorem{prop}[thm]{Proposition}
\newtheorem{cond}[thm]{Condition}
\newtheorem{rem}[thm]{Remark}
\newtheorem{cons}[thm]{Construction}

\newtheorem{notation}[thm]{Notation}

\newcommand{\cone}{\mathring{c}}

\title{%
On $3$-dimensional locally standard $T$-pseudomanifolds
}

\author{
  Yuya Koike\thanks{Department of Pure and Applied Mathematics, Graduate School of Information Science and Technology, The University of Osaka, Suita, Osaka 565-0871, Japan. Email: \texttt{u175137d@ecs.osaka-u.ac.jp}} 
  }

\date{}

\begin{document}
\maketitle
\thispagestyle{empty}

\begin{abstract}
Locally standard $T$-pseudomanifolds were introduced by the authors in a previous work.
They are topological stratified pseudomanifolds equipped with torus actions. Their
equivariant homeomorphism types are classified by characteristic data under the homotopy
equivalence condition.
In this paper, we show that this condition can be removed when the
dimension is at most three. We also characterize those of dimension at most three that are
topological manifolds, in terms of their orbit spaces.
\end{abstract}

\tableofcontents

\section{Introduction}
In {\it toric topology}, the topology of manifolds is studied via torus actions, and the
correspondence between manifolds and their orbit spaces has been investigated. This field
has connections with several areas, including combinatorics, commutative algebra, algebraic
geometry, and symplectic geometry.
Davis and Januszkiewicz \cite{DJ91} introduced {\it quasitoric manifolds}. A quasitoric manifold
is a smooth $2n$-dimensional manifold with a locally standard $T^n$-action whose orbit
space is a simple polytope. They showed that the equivariant homeomorphism types of
quasitoric manifolds are completely classified by a characteristic pair consisting of the
orbit polytope and a characteristic function. Subsequently, toric topology has been
extended to broader classes of spaces and orbit spaces, and various generalizations of the
Davis-Januszkiewicz classification have been obtained (see, e.g., \cite{KK25}, \cite{MP06}, \cite{SS21}, \cite{Yos11}).
In a previous work \cite{KK26}, the authors introduced {\it locally standard
$T$-pseudomanifolds}.
This notion is a natural generalization of quasitoric manifolds,
extending the framework of toric topology from manifolds to {\it topological stratified
pseudomanifolds}.
Here, a topological stratified pseudomanifold is a generalization of a manifold
that allows cone-like singularities.
The main theorem of \cite{KK26} classifies the equivariant homeomorphism types of locally
standard $T$-pseudomanifolds satisfying the {\it homotopy equivalence condition}.
The classification is given in terms of {\it characteristic data} consisting of the orbit space, a
{\it characteristic functor}, and the {\it Chern class}.
This gives an extension of
the Davis-Januszkiewicz classification result.

In toric topology, orbit spaces have been primarily restricted to manifolds with corners or
simple convex polytopes.
This is because the theory has been developed mainly in the
framework of manifolds (and orbifolds).
In the framework of locally standard
$T$-pseudomanifolds, orbit spaces are topological stratified pseudomanifolds.
This class includes non-simple convex polytopes and cell decompositions of spheres. It was
also shown in \cite{KK26} that projective toric varieties, complete toric varieties, and
quasitoric manifolds are locally standard $T$-pseudomanifolds up to equivariant
homeomorphism.

However, \cite{KK26} left several problems open.
The classification theorem of \cite{KK26}
requires the \emph{homotopy equivalence condition}, namely that the ``interior" of the orbit
space is homotopy equivalent to the orbit space itself. 
Locally standard $T$-manifolds and complete toric varieties satisfy this condition.
A connected graph with two or more edges, however, does not.
Indeed, the interior of such a graph is
obtained by removing its vertices.
A classification without this assumption was not established in \cite{KK26}. Moreover, the geometry
and topology of locally standard $T$-pseudomanifolds were also left largely unexplored.
The definition of a locally standard $T$-pseudomanifold is inductive, and cone
singularities appear as open cones over lower-dimensional locally standard
$T$-pseudomanifolds. Therefore, the study of low-dimensional cases therefore serves as a natural
starting point for understanding the general structure.
This paper thus studies locally standard
$T$-pseudomanifolds of dimension at most three.

\subsection{Main results}

The main result of this paper is a classification of locally standard $T$-pseudomanifolds
of dimension at most three, without assuming the homotopy equivalence condition.
Every such space falls into one of the following three types, and we give a classification
for each type. The classification of each type is not restricted to dimension at most
three:

\begin{itemize}
  \item a principal $T$-bundle over a topological manifold;
  \item a {\it locally standard $T$-pseudomanifold over a graph}
        (Subsection~\ref{subsection over graph});
  \item a {\it locally standard $T$-pseudomanifold with a semi-trivial filtration}
        (Subsection~\ref{subsection semi-trivial}).
\end{itemize}
Furthermore, using the main classification theorem, we characterize locally standard
$T$-pseudomanifolds of dimension at most three that are topological manifolds, in terms of
their orbit spaces.
This characterization recovers a part of the classification of $3$-dimensional manifolds
with $S^1$-actions without exceptional orbits, due to Raymond \cite[Theorem~1]{Ray68}.
In this sense, the
main classification theorem of this paper extends a part of Raymond's result to the setting
of topological stratified pseudomanifolds.

\subsection{Structure of this paper}
In Section~\ref{sec 2}, we review the definition and the classification theorem of locally standard $T$-pseudomanifolds.
The material in this section is based on \cite{KK26}.
In Subsection~\ref{sec 2.1}, we recall the definition of topological stratified pseudomanifolds, and in Subsection~\ref{sec 2.2}, we introduce locally standard $T$-pseudomanifolds.
In Subsection~\ref{sec 2.3}, we present the main theorem of \cite{KK26}, namely the classification theorem for locally standard $T$-pseudomanifolds satisfying the homotopy equivalence condition.
This theorem asserts that equivariant homeomorphism types are classified by characteristic data.
In Subsection~\ref{sec 2.4}, we discuss canonical models, which play an important role in the proof of the classification theorem.
Canonical models also play an essential role in the present paper.

In Section~\ref{sec 3}, we classify equivariant homeomorphism types of locally standard $T$-pseudomanifolds of dimension at most three.
In these dimensions, the homotopy equivalence condition assumed in \cite{KK26} can be removed.
In Subsection~\ref{sec 3.1}, we classify characteristic functors, which form one of the ingredients of characteristic data.
In Subsections~\ref{subsection over graph} and~\ref{subsection semi-trivial}, we study locally standard $T$-pseudomanifolds over graphs and semi-trivial filtrations, respectively, both of which arise essentially in dimensions at most three.
In Subsection~\ref{sec 3.4}, we summarize the resulting classification results in tables.

In Section~\ref{sec 4}, we investigate when locally standard $T$-pseudomanifolds of dimension at most three admit structures of topological manifolds.
In Subsection~\ref{sec 4.1}, we characterize the manifold condition in terms of links of points.
Subsections~\ref{sec 4.2} and~\ref{sec 4.3} provide more explicit characterizations in several cases.
In Subsection~\ref{sec 4.4}, we summarize the classification results in tabular form.

Appendix~\ref{sec A} contains several standard lemmas on topological stratified pseudomanifolds.

\subsection{Acknowledgments}
The author is grateful to Nobutaka Asano for many fruitful discussions related to the research presented in this paper. The author would also like to thank Shintaro Kuroki for carefully reading an earlier version of the manuscript and for providing valuable comments and suggestions.

\section{Classification of locally standard $T$-pseudomanifolds}\label{sec 2}

In this section, we review locally standard $T$-pseudomanifolds introduced in \cite{KK26} and their classification theorem up to equivariant homeomorphism.
Locally standard $T$-pseudomanifolds are topological stratified pseudomanifolds equipped with torus actions.

\subsection{Topological stratified pseudomanifolds}\label{sec 2.1}

We begin by recalling the definition of a topological stratified pseudomanifold.
For details, see \cite[Chapter 2]{Fri20}.

A {\it filtered space} is a Hausdorff space $Q \neq \emptyset$ equipped with a filtration
\[
\mathfrak{Q} : Q = Q_{l+n} \supsetneq Q_{l+n-1} \supset Q_{l+n-2} \supset \cdots \supset Q_{l+i} \supset \cdots \supset Q_l \supsetneq Q_{l-1}=\emptyset, 
\quad (l \ge 0, \,  n \ge 0)
\]
by closed subsets.
We denote such a filtered space by $(Q, \mathfrak{Q})$ and use the notation
\[
\mathring{Q}:=Q_{l+n}\setminus Q_{l+n-1}.
\]
When $Q_i \setminus Q_{i-1} \neq \emptyset$,  each connected component of $Q_i \setminus Q_{i-1}$ is called a {\it stratum} of dimension $i$.
In particular,  a stratum of $\mathring{Q}$ is called a {\it top stratum}.
The space $Q_i$ is called the $i$-{\it skeleton}.
The index $l$ indicates the lowest dimension of the non-empty skeletons.
We call $n$ the {\it length} of $\mathfrak{Q}$.
If $Q_i \setminus Q_{i-1}=\emptyset$,  we omit $Q_i$ from the filtration.

To define a topological stratified pseudomanifold, we first introduce the notion of an {\it open cone}.
Open cones play an essential role in describing the local structure around singular points.
Here, a singular point means a point that does not admit a neighborhood homeomorphic to a Euclidean space.

\begin{definition}[open cone {{\cite[Definition 2.1.1]{Max19}}}]\label{def of open cone}
Let $L$ be a compact Hausdorff space. An {\it open cone on $L$} is defined by
\[
\mathring{c}(L):=L \times [0,  1) / (L \times \{0\}).
\] 
Namely,  $\mathring{c}(L)$ is the quotient space of $L\times [0, 1)$ obtained by collapsing the subspace $L\times \{0\}$ to a single point. We assume that \( \mathring{c}(\emptyset) \) is a single point. We call the point $[v, 0]\in \cone(L)$ a {\it cone vertex},  where $v$ is any element in $L$.
\end{definition}

We now define topological stratified pseudomanifolds.

\begin{definition}[topological stratified pseudomanifold {{\cite[Chapter 2]{Fri20}}}]
\label{def of pseudomanifold}

A {\it $0$-dimensional topological stratified pseudomanifold} is a set of points with the discrete topology.
For $n\ge 0$ and $l \ge 0$ with $l+n\neq 0$,  an {\it $(l+n)$-dimensional topological stratified pseudomanifold} is a filtered space $(Q, \mathfrak{Q})$ of length $n$ satisfying the following conditions:
\begin{enumerate}
\item
Every $(l+i)$-stratum is an $(l+i)$-dimensional topological manifold;

\item
The union $\mathring{Q}$ of top strata is dense in $Q$;

\item
For each point \( p \in Q_{l+i} \setminus Q_{l+i-1} \),  there exists a triple $(U_p,  L_p,  \varphi_p)$ consisting of
\begin{enumerate}
\item
an open neighborhood \( U_p \) of \( p \) in \( Q \);

\item
an $(n-i-1)$-dimensional compact topological stratified pseudomanifold \( L_p \),  possibly empty,  called a {\it link of $p$};

\item
a homeomorphism
\[
\varphi_p : U_p \to O_p \times \mathring{c}(L_p), 
\]
where \( O_p \) is a contractible open subset of \( \mathbb{R}^{l+i} \),  such that
\[
\varphi_p(U_p\cap Q_{l+i+j+1})
=
O_p\times \mathring{c}((L_p)_j)
\]
for every \( -1\le j\le n-i-1 \).
Here,  the filtration on $\mathring{c}(L_p)$ is given by
$
(\mathring{c}(L_p))_{j+1}
:=
\mathring{c}((L_p)_j).
$
\end{enumerate}
\end{enumerate}
\end{definition}

\begin{rem}[small open neighborhoods]\label{rem small nbh}
Let $Q$ be a topological stratified pseudomanifold and let $S\subset Q$ be a stratum.
An open neighborhood $U_p$ of a point $p\in S$ is called {\it small} if,  for every point $p'\in U_p$,  the stratum $S'$ satisfying $p'\in \overline{S'}$ also satisfies
$S\subset \overline{S'}$.
If $Q$ is compact,  then every point admits a small open neighborhood (see \cite[Appendix B]{KK26}).
Moreover,  small open neighborhoods form an open basis of $Q$.
\end{rem}

\subsection{Definition of a locally standard $T$-pseudomanifold}\label{sec 2.2}
We next introduce locally standard $T$-pseudomanifolds following \cite{KK26}.
Throughout this paper,  we use the following three notations:
\begin{itemize}
\item the circle group $U(1):=\{z\in\mathbb{C}\ |\ |z|=1\}$ that acts on $\mathbb{C}$ by scalar multiplication;
\item $\mathbb{C}^{\times}:=\mathbb{C}\setminus \{0\}$; 
\item $T^{m}\cong U(1)^{m}$ is an $m$-dimensional torus. We often denote it by $T$ when the dimension of $T$ is clear from the context.
\end{itemize}

Let $X$ be a second-countable,  compact Hausdorff space.
Suppose that $T^m$ acts continuously and effectively on $X$.
We assume that the isotropy subgroup $T_x$ is a subtorus
(i.e.,  a connected closed subgroup of $T^{m}$) for all $x\in X$.
For integers $l, n$ with $l\ge0$ and $m\ge n\ge0$, 
define the following subsets:
\begin{align*}
X_{l+2i+1+(m-n)}=X_{l+2i+(m-n)}:=\{x\in X\ |\ \dim T(x)\le i+(m-n)\}, 
\quad
(0\le i\le n), 
\end{align*}
where $T(x)$ is the $T^m$-orbit of $x\in X$.
The $T^m$-action induces the following {\it filtration by orbit dimension}:
\begin{align*}
\mathfrak{X}: X=X_{l+m+n}\supset X_{l+2(n-1)+(m-n)}\supset \cdots \supset X_{l+2i+(m-n)}\supset \cdots \supset X_{l+(m-n)}\supset \emptyset.
\end{align*}
Since $X_{l+2i+(m-n)}$ is $T$-invariant,  the orbit projection $\pi: X \to Q:=X/T$ induces the following filtration $\mathfrak{X}/T$ of the orbit space:
\begin{align*}
\mathfrak{X}/T: Q= Q_{l+n} \supset Q_{l+n-1} \supset \cdots \supset Q_{l+i}
\supset \cdots \supset Q_{l} \supset \emptyset, 
\end{align*}
where $Q_{l+i} := X_{l+2i+1+(m-n)}/T=X_{l+2i+(m-n)}/T $ for $0 \leq i \leq n$.
We call $(X/T,  \mathfrak{X}/T)$ the {\it filtered orbit space},  denoted by $(Q,  \mathfrak{Q})$.

In this paper,  we assume the following three conditions:
\begin{enumerate}[label=(C\arabic*),  ref=C\arabic*]
\item \label{cond-1}
$(X,  \mathfrak{X})$ is a topological stratified pseudomanifold (see Definition~\ref{def of pseudomanifold});
\item \label{cond-2}
$X_{l-2+(m-n)}=X_{l-1+(m-n)}:=\{x\in X \mid  \dim T(x)\le (m-n)-1 \} = \emptyset$,  i.e.,  $i=-1$;
\item \label{cond-3}
when $l+n\neq 0$,  we assume $X_{l+m+n} \supsetneq X_{l+m+n-2}$ (i.e.,  $X$ has free orbits).
\end{enumerate}

We now introduce the notion of a locally standard $T$-pseudomanifold.

\begin{definition}[{\cite[Definition 3.2]{KK26}}]\label{def of T-pseudomanifold}
An {\it $(l+m+n)$-dimensional locally standard $T$-pseudomanifold} $(X,  \mathfrak{X})$ is defined by induction on $(l+n)$ if it satisfies the following:
\begin{itemize}
\item for $n = 0$ and $l = 0$,  $X$ is a disjoint union of $m$-dimensional tori equipped with the multiplicative $T^{m}$-action (note that the compactness of $X$ implies that this is a finite disjoint union of $T^{m}$);

\item for $n\ge 0$ and $l \ge 0$,  with $l+n\neq 0$, the following condition holds:

 For any \( x \in X_{l+2i+(m-n)} \setminus X_{l+2(i-1)+(m-n)} \),  there exists a triple \( (U_x,  L_x,  \varphi_x) \) such that
\begin{enumerate}
  \item $U_x \subset X$ is a $T$-invariant open neighborhood of $x$;

  \item $L_x$ is a ($2n-2i-1$)-dimensional compact locally standard $T_x$-pseudomanifold,  possibly empty,  equipped with an action of $T_x \cong T^{n-i}$,  where $T_x$ is the isotropy subgroup of $x$ ($L_x$ is called a {\it link} of $x$);

 \item  \( \varphi_x : U_x \to O^l \times \big( \Omega \times U(1)^{m-n} \big) \times \mathring{c}({L}_x) \)
is a weakly equivariant homeomorphism, 
where $O^l \subset \mathbb{R}^l$ is a contractible open subset and $\Omega \subset (\mathbb{C}^{\times})^i$ is a $U(1)^i$-invariant open subset.
The torus
\( T^m \) acts on \(  O^l \times \big( \Omega \times U(1)^{m-n} \big) \times \mathring{c}({L}_x) \)
via an isomorphism
\[ T^m \cong T^m/{T_x} \times T_x \cong U(1)^{i+(m-n)} \times T^{n-i},  \]
where $U(1)^{i+(m-n)}$ acts on $\Omega \times U(1)^{m-n}$
by the standard multiplication (i.e.,  the free action),  and the $T_{x}$-action on $\mathring{c}(L_{x})=L_{x}\times [0, 1)/L_{x}\times \{0\}$ is induced by the $T_{x}$-action on the $L_{x}$-factor described in (2) and is trivial on the $[0, 1)$-factor.
\end{enumerate}
The open cone $\mathring{c}(L_x)$ inherits a natural filtration:
\begin{align*}
(\mathring{c}(L_{x}))_{j+1}:=\mathring{c}((L_{x})_{j}),  \quad (-1\le j\le 2n-2i-1).
\end{align*}
\end{itemize}
\end{definition}

We introduce notation for the free part of $X$ and the corresponding orbit space.

\begin{notation}\label{notation of free}
For a locally standard $T$-pseudomanifold $(X, \mathfrak{X})$
and a topological stratified pseudomanifold $(Q, \mathfrak{Q})$, 
we use the notation
\[
\mathring{X}
:=
X_{l+m+n}\setminus X_{l+m+n-2}, 
\qquad
\mathring{Q}
:=
Q_{l+n}\setminus Q_{l+n-1}.
\]
By definition,  $\mathring{X}$ consists of free orbits.
Hence,  the orbit projection
$
\pi|_{\mathring{X}}:\mathring{X}\to \mathring{Q}
$
is a principal $T$-bundle.
\end{notation}

\subsection{Classification theorem}\label{sec 2.3}

In this subsection, we review the classification theorem for locally standard $T$-pseudomanifolds established in \cite[Theorem 1.1]{KK26}.
A rough statement of the theorem is as follows
(the precise statement is given in Theorem~\ref{classification theorem intro}):
\[
\begin{tikzcd}[ampersand replacement=\&]
\left\{
\begin{array}{l}
\text{locally standard $T^m$-pseudomanifolds } X \\
\text{satisfying the homotopy equivalence condition}
\end{array}
\right\} \arrow[d,  leftrightarrow,  "{\Large 1:1}"] \\
\left\{
\text{orbit spaces } Q
\right\}
+
\left\{
\text{characteristic functors } \lambda
\right\}
+
\left\{
\text{Chern classes }
c \in H^2(Q; \mathbb{Z}^m)
\right\}
\end{tikzcd}
\]
The triple $(Q,  \lambda,  c)$ is called the {\it characteristic data}.
We now explain each component.

Let $X$ be an $(l+m+n)$-dimensional locally standard $T^m$-pseudomanifold.
The orbit space $Q=X/T^m$ has the structure of an $(l+n)$-dimensional topological stratified pseudomanifold (\cite[Proposition 4.1]{KK26}).
We next explain the characteristic functor.
The set of strata $\mathcal{S}(Q)$ of a topological stratified pseudomanifold has the structure of a poset category.
Namely, for strata $S_1,S_2 \in \mathcal{S}(Q)$, 
we define a morphism $S_1 \to S_2$ whenever
$S_1 \subset \overline{S_2}$,
where $\overline{S_2}$ denotes the closure of $S_2$.
Let $\mathcal{T}^m$ denote the category consisting of subtori of $T^m$,
whose morphisms are given by inclusions.
A {\it characteristic functor} is a functor
\begin{align}\label{def of char functor}
\lambda:\mathcal{S}(Q)^{\mathrm{op}}\to \mathcal{T}^m
\end{align}
such that, for each stratum $S$ of dimension $l+n-i$
(i.e., codimension $i$),
$\lambda(S)$ is an $i$-dimensional subtorus.
Here, $\mathcal{S}(Q)^{\mathrm{op}}$ denotes the opposite category of $\mathcal{S}(Q)$.
A characteristic functor can be constructed from a locally standard $T^m$-pseudomanifold $X$
(\cite[Section 5]{KK26}).
Indeed, the isotropy subgroup of a point $x\in X$
depends only on the stratum $S\subset Q$ containing $\pi(x)$.
Writing this subgroup as $T_S$,
we obtain a characteristic functor by setting
$\lambda(S)=T_S$.

We next explain the homotopy equivalence condition and the Chern class.

\begin{cond}[homotopy equivalence condition]\label{cond of homotopy}
The inclusion map $\iota: \mathring{Q} \to Q$ is a homotopy equivalence.
\end{cond}

Under this condition, the induced homomorphism
$\iota^{\ast}: H^2(Q;\mathbb{Z}^m) \to H^2(\mathring{Q}; \mathbb{Z}^m)$
is an isomorphism.
On the other hand, since the $T^m$-action on $\mathring{X}$ is free,
the orbit projection
$\pi|_{\mathring{X}}: \mathring{X} \to \mathring{Q}$
is a principal $T^m$-bundle.
Such principal $T^m$-bundles are classified by cohomology classes
$c^{\mathrm{free}} \in H^2(\mathring{Q}; \mathbb{Z}^m)$.
We call $c^{\mathrm{free}}$ the {\it Chern class of the principal bundle}.
Furthermore, since $\iota^{\ast}$ is an isomorphism,
we may define
$c:=(\iota^{\ast})^{-1}(c^{\mathrm{free}})
\in H^2(Q;\mathbb{Z}^m)$.
We call $c$ the {\it Chern class} of $X$.

We also recall the notion of (weak) isomorphisms between characteristic data
(\cite[Definition 6.5]{KK26}).

\begin{definition}\label{def of isomorphism of char pairs}
Let $f: Q \to Q'$ be a stratified homeomorphism (see \cite[Section 2]{Fri20}).
We say that $f$ is a {\it weak isomorphism between characteristic data}
$(Q,  \lambda,  c)$ and $(Q',  \lambda',  c')$
if the following conditions are satisfied:
\begin{enumerate}
    \item[(1)]
    $c=f^{\ast}(c')$,  where $f^{\ast} : H^2(Q'; \mathbb{Z}^m)
        \to
        H^2(Q; \mathbb{Z}^m)$
    is the induced isomorphism from $f$;

    \item[(2)]
    there exists an automorphism $\psi: T^m \to T^m$
    such that the following diagram commutes
    (i.e.,  for each stratum $S \in \mathcal{S}(Q)^{\mathrm{op}}$, 
    $\Psi \circ \lambda(S)
    =
    \lambda' \circ \mathcal{S}(f)(S)$):
    \begin{align}\label{comm diagram: weak isomorphism}
    \begin{tikzcd}[ampersand replacement=\&]
    \mathcal{S}(Q)^{\mathrm{op}}
        \ar{rr}{\mathcal{S}(f)}[']{\cong}
        \ar{d}[']{\lambda}
    \&
    \&
    \mathcal{S}(Q')^{\mathrm{op}}
        \ar{d}{\lambda'}
    \\
    \mathcal{T}
        \ar{rr}{\Psi}[']{\cong}
    \&
    \&
    \mathcal{T}
    \end{tikzcd}
    \end{align}
    where $\Psi$ is the isomorphism functor determined by
    $\Psi(T_S)=\psi(T_S)$ for $T_S \in \mathcal{T}$.
\end{enumerate}
If there exists such a weak isomorphism between
$(Q,  \lambda,  c)$ and $(Q', \lambda',  c')$, 
then we say that $(Q,  \lambda,  c)$ is
{\it weakly isomorphic} to $(Q', \lambda',  c')$.
When the automorphism $\psi$ is the identity, 
we simply say {\it isomorphism} instead of weak isomorphism, 
and {\it isomorphic} instead of weakly isomorphic.
\end{definition}

With these preparations, we can state the classification theorem.

\begin{thm}[classification theorem]\label{classification theorem intro}
Let $X$ and $X'$ be locally standard $T$-pseudomanifolds satisfying the homotopy equivalence condition (Condition~\ref{cond of homotopy}).
Then the following statements are equivalent:
\begin{enumerate}
    \item
    The characteristic data $(Q, \lambda, c)$ of $X$ and
    the characteristic data $(Q', \lambda', c')$ of $X'$
    are (weakly) isomorphic;

    \item
    $X$ and $X'$ are (weakly) $T$-equivariantly homeomorphic.
\end{enumerate}
\end{thm}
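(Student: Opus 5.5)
The two implications are quite different in difficulty. For (2) $\Rightarrow$ (1) I would start from a (weakly) equivariant homeomorphism $F\colon X\to X'$ with automorphism $\psi$, and read off the data directly.

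First, $F$ maps orbits to orbits and satisfies $T'_{F(x)}=\psi(T_x)$ for every $x$. In particular it preserves orbit dimension, so it respects the filtrations by orbit dimension. Consequently it descends to a stratified homeomorphism $f\colon Q\to Q'$. The same isotropy relation gives $\lambda'(\mathcal{S}(f)(S))=\psi(\lambda(S))$ for each stratum $S$, which is diagram \eqref{comm diagram: weak isomorphism}.

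Next, $F$ restricts to a $\psi$-equivariant bundle isomorphism $\mathring{X}\to\mathring{X'}$ covering $f|_{\mathring{Q}}$. Naturality of Chern classes of principal bundles then gives $c^{\mathrm{free}}=f^{\ast}(c'^{\mathrm{free}})$, after identifying coefficients via $\psi$. Finally, the square formed by $\iota,\iota'$ and $f,f|_{\mathring Q}$ commutes, and $\iota^\ast,\iota'^\ast$ are isomorphisms by Condition~\ref{cond of homotopy}. Transporting the identity through them yields $c=f^\ast(c')$.

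For (1) $\Rightarrow$ (2) I would build a canonical model $X(Q,\lambda,c)$ and show that every $X$ is equivariantly homeomorphic to the model of its own data. The model is obtained from a principal $T$-bundle $P\to Q$ with Chern class $c$; this bundle exists because $c$ lives on all of $Q$. One then collapses $P\times T$-orbits over each stratum $S$ by the subtorus $\lambda(S)$, exactly as in Davis–Januszkiewicz: $X(Q,\lambda,c)=P/\!\sim$, where $(p,t)$-points over $S$ are identified modulo $\lambda(S)$.

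Once this is in place, the isomorphism of data $(f,\psi)$ pulls $P'$ back to a bundle isomorphic to $P$, since $f^\ast c'=c$. That bundle isomorphism is compatible with the collapse relations because $\Psi\circ\lambda=\lambda'\circ\mathcal{S}(f)$. So the models are (weakly) equivariantly homeomorphic, and it remains to show $X\cong X(Q,\lambda,c)$.

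This last step is the main obstacle. I would produce a continuous equivariant map $P\to X$ over $Q$ extending the free-part identification $P|_{\mathring Q}\cong\mathring X$, and then check that it induces a homeomorphism on the quotient. The difficulty is that $\pi\colon X\to Q$ need not be a bundle near singular strata, so a section-type argument must be done locally.

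I would proceed by induction on the length of the filtration, using the local charts $O^l\times(\Omega\times U(1)^{m-n})\times\mathring{c}(L_x)$ together with the inductive hypothesis applied to the links $L_x$. This shows that locally $X$ agrees with the canonical model over the neighbourhood $O\times\mathring{c}(L_x/T_x)$. The homotopy equivalence condition enters in gluing these local identifications: it makes the restriction $H^2(Q)\to H^2(\mathring Q)$ injective, so the free-part bundle $\mathring{X}\to\mathring Q$ extends uniquely, up to isomorphism, to a bundle over $Q$. One then has to verify that the local model identifications, which over each chart are defined up to gauge transformations of $\mathring X$, can be chosen compatibly. I would handle this with a neighbourhood-deformation argument: retract $Q$ onto a neighbourhood of $\mathring{Q}$ in a stratum-compatible way, and use that gauge maps $\mathring Q\cap U\to T$ extend over contractible neighbourhoods.

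Finally, a continuous equivariant bijection from a compact space onto a Hausdorff space is a homeomorphism, which concludes the argument.
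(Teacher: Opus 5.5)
Your overall architecture matches the one the paper relies on. The paper does not reprove the theorem; it quotes it from \cite{KK26}. There, (2)$\Rightarrow$(1) is exactly the argument of Remark~\ref{rem free chern}: naturality on the free part gives condition~(1)$'$, and Condition~\ref{cond of homotopy} converts it into (1). The direction (1)$\Rightarrow$(2) is ``isomorphic data give equivariantly homeomorphic canonical models'' together with Proposition~\ref{lem11.2}, $X\cong X(Q,\lambda,c)$.

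Your clause ``after identifying coefficients via $\psi$'' is in fact necessary. A principal $T^2$-bundle over $S^2$ with Chern class $(1,0)$, and the same space with the two circle factors of $T^2$ interchanged, are weakly equivariantly homeomorphic. Yet no homeomorphism $f$ of $S^2$ satisfies $f^\ast(0,1)=(1,0)$. So condition~(1) of Definition~\ref{def of isomorphism of char pairs} must be read with $\psi$ acting on $\mathbb{Z}^m$.

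The genuine problem is that all the content lies in $X\cong X(Q,\lambda,c)$, and your sketch of that step does not work.

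\textbf{The induction cannot use the links.} Links do not inherit Condition~\ref{cond of homotopy}. Let $Q$ be the closed cone on the loop graph $K$, i.e.\ a disk stratified by its centre $v$, a boundary point $w$, the open radius $(v,w)$, the arc $\partial D^2\setminus\{w\}$, and the remaining open cell. Then $\mathring{Q}$ is an open disk cut along a radius, so $\mathring{Q}\simeq Q$. However, the link of $v$ is $K$, and $\mathring{K}$ (an open arc) is not homotopy equivalent to $K\simeq S^1$. For a $T^2$-pseudomanifold over this $Q$, local agreement with the model at $v$ already requires that the link $L_v$, a $T^2$-pseudomanifold over the loop, be a canonical model. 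That needs a separate argument, namely Lemma~\ref{section of a loop}, and is not an instance of your inductive hypothesis.

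\textbf{The gluing is not an $H^2$-problem.} Injectivity of $\iota^\ast$ only gives uniqueness of a bundle extending $\mathring{X}$; existence needs surjectivity. What actually has to be shown is that the identification $P|_{\mathring{Q}}\cong\mathring{X}$ can be chosen so that, near each stratum $S$, it extends continuously modulo $\lambda(S)$. Your proposed mechanism for this fails on three counts:
\begin{enumerate}
\item A map $\mathring{Q}\cap U\to T$ that extends over a contractible $U$ is null-homotopic. But $\mathring{Q}\cap U\cong O_p\times\mathring{L}_p\times(0,1)$ can have $H^1\neq 0$.
\item Even a null-homotopic map on this dense open set need not extend continuously to the singular strata.
\item Since $\mathring{Q}$ is dense in the Hausdorff space $Q$, the only retraction of $Q$ onto a neighbourhood of $\mathring{Q}$ is the identity, so the proposed retraction does nothing.
\end{enumerate}

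So the key lemma is left unproved. You need an actual mechanism that controls these gauge choices near the singular strata, such as the section arguments the paper uses in Lemmas~\ref{section of a loop} and~\ref{section of graph}, in place of the induction-plus-retraction sketch.
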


We next describe the characteristic data
and the notion of (weak) isomorphisms
for locally standard $T$-pseudomanifolds
without the homotopy equivalence condition.

\begin{rem}\label{rem free chern}
When a locally standard $T$-pseudomanifold $X$
does not necessarily satisfy the homotopy equivalence condition,
its characteristic data is defined to be
$
(Q,\lambda,c^{\mathrm{free}}).
$
Similarly, for two characteristic data
$
(Q,\lambda,c^{\mathrm{free}})
$
and
$
(Q',\lambda',(c^{\mathrm{free}})'),
$
one can define the notion of (weak) isomorphisms
by replacing condition {\rm(1)} in
Definition~\ref{def of isomorphism of char pairs}
with the following condition:
\begin{enumerate}
    \item[(1)']
    $
    c^{\mathrm{free}}
    =
    (f|_{\mathring Q})^{\ast}
    \bigl((c^{\mathrm{free}})'\bigr),
    $
    where
    $
    (f|_{\mathring Q})^{\ast}
    :
    H^2(\mathring Q'; \mathbb{Z}^m)
    \to
    H^2(\mathring Q; \mathbb{Z}^m)
   $
    is the induced isomorphism from the restriction
    $
    f|_{\mathring Q} : \mathring Q \to \mathring Q'.
   $
\end{enumerate}
Moreover, if locally standard $T$-pseudomanifolds
$X$ and $X'$ are (weakly) $T$-equivariantly homeomorphic,
then their characteristic data
$
(Q,\lambda,c^{\mathrm{free}})
$
and
$
(Q',\lambda',(c^{\mathrm{free}})')
$
are also (weakly) isomorphic in the above sense.
This can be shown in the same way as
\cite[Proposition 6.8]{KK26}.
In \cite{KK26}, this fact is first established
without assuming the homotopy equivalence condition,
and then the homotopy equivalence condition is used
to show that condition {\rm(1)} in
Definition~\ref{def of isomorphism of char pairs}
is equivalent to condition {\rm(1)'} above.
\end{rem}

\subsection{Canonical models}\label{sec 2.4}
A key point in the proof of the classification theorem
(Theorem~\ref{classification theorem intro}) is to show that a locally standard
$T$-pseudomanifold $X$ is equivariantly homeomorphic to the {\it canonical model}
$X(Q,\lambda,c)$ constructed from its characteristic data $(Q,\lambda,c)$.
Canonical models also play an essential role in this paper.

Let $Q$ be an $(l+n)$-dimensional topological stratified pseudomanifold, and let
$
  \lambda \colon \mathcal{S}(Q)^{\mathrm{op}} \to \mathcal{T}^m
$
be a characteristic functor on $Q$, where $m\ge n$.
Let
$
  c\in H^2(Q;\mathbb{Z}^m)
$
be a cohomology class.
We denote these data by $(Q,\lambda,c)$.
In other words, $(Q,\lambda,c)$ is an abstraction of the characteristic data of a locally standard $T$-pseudomanifold.
Weak isomorphisms between such data are defined in the same way as in Definition~\ref{def of isomorphism of char pairs}.
We now construct the canonical model $X(Q,\lambda,c)$ associated with $(Q,\lambda,c)$.

\begin{definition}\label{def of canonical model}
The {\it canonical model} $X(Q,  \lambda,  c)$ is defined as the quotient topological space
\[X(Q,  \lambda,  c):={P_c}/{\sim},  \]
where $\xi: P_c \to Q$ is a $T^m$-principal bundle over $Q$ whose Chern class is $c \in H^2(Q; \mathbb{Z}^m)$,  and the equivalence relation $\sim$ is defined as follows.
Two points $x$ and $y$ in $P_c$ are equivalent (denoted by $x\sim y$ or $x\sim_{\lambda} y$ if we emphasize the characteristic functor $\lambda$) if they satisfy $\xi(x)=\xi(y)=:p$ and $x$ and $y$ lie in the same $\lambda(S)$-orbit when $p \in {S}$ for some stratum $S$.
Moreover,  $X(Q,  \lambda,  c)$ has the canonical $T^m$-action induced by the action on $P_c$.
\end{definition}

\begin{rem}\label{face acyclic case}
When the second cohomology of $Q$ vanishes,  every $T^m$-principal bundle over $Q$ is trivial.
Therefore,  we have
\[
X(Q,  \lambda,  c)
=
X(Q,  \lambda,  0)
=
Q \times T^m / {\sim}
\]
where $(p, t) \sim (q, s)$ if $p=q$ and $t^{-1}s \in \lambda(S)$ whenever $p \in S$ for some stratum $S$.
\end{rem}

Canonical models satisfy the following property.

\begin{prop}[{\cite[Lemma 11.2]{KK26}}]\label{lem11.2}
Let $X$ be a locally standard $T$-pseudomanifold satisfying the homotopy equivalence
condition (see Condition~\ref{cond of homotopy}), and let $(Q, \lambda, c)$ be its
characteristic data. Then there exists an equivariant homeomorphism
\[
X \cong X(Q, \lambda, c).
\]
\end{prop}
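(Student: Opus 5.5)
The plan is to build an explicit equivariant map between $X$ and $X(Q,\lambda,c)$ and check it is a homeomorphism. By the homotopy equivalence condition, $\iota^\ast: H^2(Q;\mathbb{Z}^m)\to H^2(\mathring Q;\mathbb{Z}^m)$ is an isomorphism. Hence the restriction $P_c|_{\mathring Q}$ has Chern class $\iota^\ast c = c^{\mathrm{free}}$. Since principal $T^m$-bundles over $\mathring Q$ are classified by their Chern class, there is a bundle isomorphism $P_c|_{\mathring Q}\cong \mathring X$ over the identity of $\mathring Q$. Because no identifications occur over the top stratum, this already gives an equivariant homeomorphism between $\mathring X$ and the open dense part of $X(Q,\lambda,c)$ lying over $\mathring Q$.

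The main step is to extend this homeomorphism over the singular strata. I would proceed in three steps.
\begin{enumerate}
\item Construct an equivariant map $\Phi: P_c\to X$ covering $\mathrm{id}_Q$ and extending the given isomorphism on $\mathring Q$. The obstruction to equivariant sections lives in cohomology. Using $\iota$ a homotopy equivalence, I would try to obtain $\Phi$ by transporting along a deformation of $Q$ into $\mathring Q$, in the spirit of a collar. Alternatively, I would work locally using the charts $\varphi_x$, which exhibit a neighborhood of an orbit as $O^l\times\Omega\times U(1)^{m-n}\times\mathring c(L_x)$. There the canonical model of the local data is visibly homeomorphic to the chart, by induction on $l+n$ applied to the link $L_x$, whose orbit space is a cone factor and is therefore contractible.
\item Check that $\Phi$ is constant exactly on $\lambda(S)$-orbits over a stratum $S$. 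This holds because the isotropy of points over $S$ is $T_S=\lambda(S)$ and $\Phi$ is equivariant. Consequently $\Phi$ descends to a continuous bijective equivariant map $\bar\Phi: X(Q,\lambda,c)\to X$.
\item Conclude that $\bar\Phi$ is a homeomorphism, since $X(Q,\lambda,c)$ is compact (being a quotient of a bundle over compact $Q$) and $X$ is Hausdorff.
\end{enumerate}

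I expect the hardest part to be the first step. The difficulty is that $\Phi$ must agree near each singular orbit with the chart trivializations, and these local choices must be patched compatibly. I would first try induction on strata by decreasing dimension of $Q$, extending over tubular (conical) neighborhoods of each stratum. At each stage I would use that the torus-valued gluing functions on $\mathring c(L)\setminus\{\text{vertex}\}$ can be straightened. This straightening works because the relevant bundle restricted to $\mathring Q\cap U_p$ is determined by its Chern class, and the homotopy equivalence condition (applied locally via the cone structure) makes it extend over $U_p$.
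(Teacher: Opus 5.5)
The paper does not prove this statement itself; it imports it from \cite[Lemma 11.2]{KK26}. Judged on its own, your proposal has a genuine gap. Steps 2 and 3 are correct but routine: an equivariant $\Phi$ over $\mathrm{id}_Q$ is automatically onto each orbit and identifies exactly the $\lambda(S)$-orbits, and the compact-to-Hausdorff argument finishes. Step 1, however, carries the entire content of the proposition. An equivariant homeomorphism $X(Q,\lambda,c)\to X$ over $\mathrm{id}_Q$, composed with the quotient $P_c\to X(Q,\lambda,c)$, is exactly such a $\Phi$, and none of your suggested mechanisms produces it.

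First, $\Phi$ cannot be required to extend an arbitrary bundle isomorphism $P_c|_{\mathring Q}\cong\mathring X$. Take $Q=[0,1]$, $m=2$, $\lambda(0)=U(1)\times\{1\}$ and $\lambda(1)=\{1\}\times U(1)$, so that $X=[0,1]\times T^2/{\sim}\cong S^3$. Then $(t,(a,b))\mapsto[t,(a,b\,e^{2\pi i/t})]$ is a bundle isomorphism over $(0,1)$. It has no continuous extension to $t=0$, because the fibre over $0$ is $T^2/\lambda(0)$ and the second coordinate oscillates. So the isomorphism must be chosen with controlled behaviour near every stratum with $\lambda(S)\neq T$, and making that choice is the actual problem; for $c=0$ it is the same as producing a global section of $\pi$.

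The ``deformation'' route does not supply this choice. Condition~\ref{cond of homotopy} gives only an abstract homotopy inverse $r\colon Q\to\mathring Q$, not a collar or a stratum-preserving deformation. Moreover, $\pi$ is not a fibration, since its fibres $T/\lambda(S)$ change homotopy type across strata. So there is no covering homotopy along which a map $P_c\to X$ over $\iota\circ r$ could be transported back to one over $\mathrm{id}_Q$.

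The local route does not close up either. Induction on links is unavailable, because links need not satisfy Condition~\ref{cond of homotopy} even when $X$ does. For $(l,m,n)=(0,2,2)$, let $Q$ be a closed square whose $1$-skeleton is the boundary together with a segment from a corner $p$ to an interior point $q$, with $0$-strata the corners and $q$. Then $Q$ and $\mathring Q$ are both contractible. But the link of $p$ in $Q$ is a path with two edges, and the link of $q$ is a loop; neither is homotopy equivalent to its top stratum. That links over such graphs are canonical models is exactly what this paper has to prove separately, in Proposition~\ref{canonical model of graph}. The same example shows that the condition does not localize, contrary to your ``applied locally via the cone structure''.

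What the global hypothesis does give on a chart is only that $\mathring X|_{U_p}$ is trivial, since it is a restriction of $P_c$ and $U_p$ is contractible. A trivial free part does not by itself give a section of $\pi$ over $U_p$; that is the original extension problem one level down.

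Even granting local sections, two of them differ on the free part by a map into $T$. That map need not extend across the singular set (as in the oscillating example), and along $S$ it is determined only modulo $\lambda(S)$. The Chern class classifies the free bundle up to isomorphism. It says nothing about whether such comparison functions can be chosen to extend and to form a cocycle representing $c$. Also, $\cone(L)$ minus its vertex is not the free part; it still contains the cone on the singular part of $L$.

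Compare the paper's low-dimensional arguments. In Proposition~\ref{key lem} an arbitrary $\mathring h$ works only because every fibre over $Q_l$ is a single point. In Lemma~\ref{section of graph} a section is built explicitly, edge by edge, and matched at the vertices. Your Step 1 needs a mechanism of that kind in general, and the proposal does not supply one.
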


\section{Classification of locally standard $T$-pseudomanifolds of dimension at most three}\label{sec 3}

In this section, let $X=X_{l+m+n}$ be a connected locally standard
$T$-pseudomanifold of dimension $l+m+n$ (see Definition~\ref{def of T-pseudomanifold}),
and let $\pi: X \to Q=Q_{l+n}$ be the orbit projection.
We use the notation
$\mathring{X}=X \setminus X_{l+m+n-2}$ and
$\mathring{Q}=Q \setminus Q_{l+n-1}$
(see Notation~\ref{notation of free}).
We assume that $m\ge 1$ and $\dim X=l+m+n \le 3$.
Then it follows that $\dim Q=l+n \le 2$.
The purpose of this chapter is to establish a classification theorem
for locally standard $T$-pseudomanifolds of dimension at most three.
Unlike \cite{KK26}, no homotopy equivalence condition is assumed.
From the definition of a locally standard $T$-pseudomanifold and the above assumptions,
the integers $(l,m,n)$ satisfy
\[
l \ge 0,  \quad
m \ge 1,  \quad
m \ge n \ge 0,  \quad
\dim X=l+m+n \le 3,  \quad
\dim Q=l+n \le 2.
\]
Hence, we obtain the following cases (Table~\ref{table1}).

\begin{table}[htbp]
\centering
\[
\begin{array}{c|c|ccc}
\dim X & \dim Q & l & m & n \\
\hline
1 & 0 & 0 & 1 & 0    \\
\hline
2 & 0 & 0 & 2 & 0   \\
2 & 1 & 1 & 1 & 0  \\
2 & 1 & 0 & 1 & 1 \\
\hline
3 & 0 & 0 & 3 & 0    \\
3 & 1 & 1 & 2 & 0 \\
3 & 1 & 0 & 2 & 1  \\
3 & 2 & 2 & 1 & 0  \\
3 & 2 & 1 & 1 & 1  \\
\end{array}
\]
\caption{Classification of the triples $(l, m, n)$}
\label{table1}
\end{table}

If $\dim Q=l+n=0$, then by the definition and the connectedness of $X$,
we have $X \cong T^m$, where the $T^m$-action is given by multiplication.
In particular, by Table~\ref{table1}, the following holds.

\begin{prop}\label{1-dim}
If $\dim X=1$, then $X \cong T^1$.
\end{prop}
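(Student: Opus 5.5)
We read off the case from Table~\ref{table1} and then apply the base case of Definition~\ref{def of T-pseudomanifold}. The table has exactly one row with $\dim X=1$, namely $(l,m,n)=(0,1,0)$. In particular $\dim Q=l+n=0$.

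For $l=n=0$, Definition~\ref{def of T-pseudomanifold} says that $X$ is a disjoint union of $m$-dimensional tori $T^m$, each carrying the multiplicative $T^m$-action. Since $X$ is compact and each copy of $T^m$ is open in the disjoint union, the union is finite. Since we assume in this section that $X$ is connected, there is exactly one component. With $m=1$ this gives $X\cong T^1$, equivariantly, with $T^1$ acting on itself by multiplication.

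The only step needing any care is confirming that Table~\ref{table1} really lists all cases with $\dim X=1$. The constraints are $m\ge 1$, $l,n\ge 0$ and $l+m+n=1$, which force $m=1$ and $l=n=0$. This triple also satisfies $m\ge n$ and $l+n\le 2$. No further obstacle arises, since the statement is an immediate consequence of the base case of the inductive definition.
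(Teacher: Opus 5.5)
Your proposal is correct and matches the paper's argument: Table~\ref{table1} forces $(l,m,n)=(0,1,0)$, so $\dim Q=0$, and the base case of Definition~\ref{def of T-pseudomanifold} together with compactness and connectedness gives $X\cong T^1$ with the multiplicative action. The paper leaves implicit the check that $m\ge 1$ and $l+m+n=1$ force exactly this triple; you make it explicit.
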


When $n=0$, we have
$X=X_{l+m}=\mathring{X}$.
Therefore, $X$ is a principal $T^m$-bundle over $Q$.
Such bundles are classified by the homeomorphism type of $Q$ and a cohomology class $c \in H^2(Q; \mathbb{Z}^m)$.
Here, the base space $Q=Q_l$ is an $l$-dimensional topological manifold, and the characteristic functor $\lambda$ assigns the trivial subgroup $\{1\}$ to $Q$.

We next consider the case $n = 1$. In this case, $l$ takes one of two values: $0$ or $1$.
We introduce the following terminology.
\begin{itemize}
  \item If $n=1$ and $l = 0$, then $Q$ has the structure of a finite graph (possibly with
    loops and multiple edges) (see Lemma~\ref{structure of graph}). In this case, $X$
    is called a \emph{locally standard $T$-pseudomanifold over a graph}.
  \item If $n=1$ and $l = 1$, then $Q$ is called a \emph{$2$-stratifold}, and $X$ is
    called a \emph{locally standard $T$-pseudomanifold over a $2$-stratifold}.
\end{itemize}

\begin{rem}
We note that our notion of a $2$-stratifold differs slightly from the standard one.
By Definition~\ref{def of pseudomanifold}, $Q_1$ is a compact $1$-dimensional manifold.
Hence $Q_1 \cong S^1 \sqcup \cdots \sqcup S^1$
(see Lemma~\ref{classification of 1-manifold}).
In \cite{GGH16}, a $2$-stratifold is defined as a $2$-dimensional topological stratified pseudomanifold in which each $S^1$ component is contained in at least three $2$-strata.
This is because if an $S^1$ component is contained in only one $2$-stratum, the singularity is inessential, and if it is contained in exactly two $2$-strata, the space is locally homeomorphic to a connected sum of surfaces; thus, the singular points carry no essential geometric information in either case. 
Since \cite{GGH16} is primarily concerned with the homotopy types of $2$-stratifolds, such cases are excluded there.

On the other hand, in this paper, a characteristic functor assigns an isotropy subgroup to each stratum.
Therefore, even if a point is non-singular as a surface, the associated isotropy subgroup still carries geometric information.
We therefore include such cases in our framework and continue to use the term ``$2$-stratifold''.
\end{rem}

\subsection{Classification of characteristic functors}\label{sec 3.1}

For each orbit space $Q=Q_{l+n}$, characteristic functors
(see \eqref{def of char functor} in Subsection~\ref{sec 2.3}) can be classified easily.
When $n=0$, the space $Q=Q_l$ is a topological manifold, and the characteristic
functor assigns the trivial subgroup $\{1\}$ to $Q$ itself.
We next suppose that $n=1$.
In this case, the poset of strata $\mathcal{S}(Q)$ consists of
$(l+1)$-dimensional strata and $l$-dimensional strata.
The characteristic functor assigns the trivial subgroup $\{1\}$ to each
$(l+1)$-dimensional stratum, and assigns a $1$-dimensional subtorus of $T^m$
to each $l$-dimensional stratum.
In particular, when $m=1$, the only $1$-dimensional subtorus of $T^1$ is $T^1$
itself.
Hence the characteristic functor on $Q$ is uniquely determined.
We denote this characteristic functor by
$\lambda=\{1, T\}$.
Moreover, the stratum $Q_l$ corresponds to the fixed point set of $X$.

In Table~\ref{table1}, the remaining case is $(l,m,n)=(0,2,1)$.
This corresponds to locally standard $T$-pseudomanifolds over graphs.
In this case, the characteristic functor assigns a $1$-dimensional subtorus of
$T^2$ to each vertex of the graph, and the trivial subgroup $\{1\}$ to each edge.

\subsection{Locally standard $T$-pseudomanifolds over graphs}\label{subsection over graph}

In this subsection, let $X$ be a locally standard $T$-pseudomanifold over a graph $Q$.
Namely, we consider the case $l=0$ and $n=1$ in the notation $Q_{l+n}$.
The purpose of this subsection is to show that a locally standard
$T$-pseudomanifold over a graph is equivariantly homeomorphic to its canonical model (Proposition~\ref{canonical model of graph}):
\[
X \cong X(Q,  \lambda,  0)=Q \times T/{\sim_\lambda},
\]
and to establish the classification theorem for locally standard
$T$-pseudomanifolds over graphs
(Theorem~\ref{classification over graph}).
The proof proceeds by constructing a section
$s: Q \to X$
of the orbit projection
(Lemma~\ref{section of graph}).
Once such a section is constructed,
the theory of simple complexes of groups
(see \cite{BH99} and \cite{DLS19})
implies that $X$ admits a canonical model.
To prove the existence of a section,
we prepare the following two lemmas.

\begin{lem}\label{section of a loop}
  Let $Q$ be a graph consisting of a single loop, and let $X$ be a locally
  standard $T^m$-pseudomanifold over $Q$. Then there exists a section
  $s: Q \to X$.
\end{lem}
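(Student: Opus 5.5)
The plan is to cut the loop open at its vertex, build a section over the resulting closed interval, and then correct it so that the two endpoints land on the same point of $X$. Here $l=0$ and $n=1$, so $Q$ has a single vertex $v$ (the $0$-stratum) and one open edge $e=\mathring{Q}\cong(0,1)$. Over $e$ the map $\pi|_{\mathring X}:\mathring X\to e$ is a principal $T^m$-bundle over a contractible base, hence trivial, and we fix a trivialization $\mathring X\cong e\times T^m$.

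Next we examine the neighbourhood of the fixed orbit over $v$. Take $x\in\pi^{-1}(v)$ and the chart $(U_x,L_x,\varphi_x)$ from Definition~\ref{def of T-pseudomanifold}. With $l=0$, $n=1$ and $i=0$ we have $U_x\cong U(1)^{m-1}\times\mathring{c}(L_x)$, where $L_x$ is a $1$-dimensional compact locally standard $T_x\cong T^1$-pseudomanifold. By Proposition~\ref{1-dim}, each component of $L_x$ is a copy of $T^1$ with its free action. Since $Q$ is a single loop, the vertex has exactly two incident edge-ends, so $L_x\cong T_x\sqcup T_x$ and
\[
U_x\cong U(1)^{m-1}\times \mathring{c}(T_x\sqcup T_x).
\]
This neighbourhood visibly has a section over $\pi(U_x)$, a small open star of $v$: fix a point $a\in T_x$ and take $r\mapsto[(1,(a,r))]$ on each of the two branches.

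Now we glue. Cut $Q$ at $v$ to obtain a closed interval $I=[0,1]$ together with the quotient map $I\to Q$ identifying $0$ and $1$. The preimage of $U_x$ in $I$ consists of two half-open collars, one at each end, and the local section above gives sections over both. Because the bundle over $e$ is trivial, these two collar sections are maps $(0,\varepsilon)\to T^m$ and $(1-\varepsilon,1)\to T^m$. Both extend continuously to the endpoints, and both endpoint values map to the same point $[1,0]$, the cone vertex, which is the orbit over $v$ in this chart. The remaining task is to choose a section over $e$ that agrees with the collar sections near both ends. In the trivialization this means finding a path in $T^m$ from the value at $\varepsilon$ to the value at $1-\varepsilon$. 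Since $T^m$ is path connected, such a path exists, and we glue by interpolation on $[\varepsilon,1-\varepsilon]$. The resulting map $s:Q\to X$ is continuous: away from $v$ it is a section of the trivial bundle, and near $v$ it coincides with the chart section.

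The step that needs care, and that I expect to be the main obstacle, is checking that the local section near $v$ really defines one consistent point over $v$. Both branch-sections tend to the same cone vertex in $\mathring{c}(L_x)$, and the factor $1\in U(1)^{m-1}$ is common to both, so continuity at $v$ holds. Notably, no monodromy obstruction arises, even though the loop has $\pi_1\neq0$. The reason is that the bundle lives only over the open edge $e$, and the identification at $v$ happens through the cone chart rather than through a gluing of the torus bundle, so any twist is absorbed by choosing the connecting path in $T^m$. I would also verify in this step that $\varphi_x$, being only weakly equivariant, still commutes with $\pi$ up to a homeomorphism of the orbit spaces; this is what makes the chart section a genuine section of $\pi$ rather than of the model projection.
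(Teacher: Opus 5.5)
Your argument is correct, but it takes a different route from the paper.

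\textbf{The paper's route.} It works globally. A complement of $T_v$ in $T^m$ (written $T^m/T_v$ there) acts freely on $X$. The quotient $T_{\mathrm{pin}}$ is identified as a pinched torus, the one-point compactification of $\rho^{-1}(\mathring e)\cong(0,1)\times T_v$. The paper takes the evident section $\overline{s}$ of $\rho\colon T_{\mathrm{pin}}\to Q$ and pulls back the principal $T^{m-1}$-bundle $X\to T_{\mathrm{pin}}$ along it. It then uses $H^2(Q;\mathbb{Z}^{m-1})=0$ to get a section of the pullback, and hence of $\pi$.

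\textbf{Your route.} You work locally. The section near $v$ is read off from the cone chart $U(1)^{m-1}\times\mathring{c}(T_x\sqcup T_x)$. The bundle over the contractible open edge is trivialized, and a path in $T^m$ joins the two collar pieces.

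\textbf{What each buys.} Your route is more elementary. It needs no splitting $T^m\cong T'\times T_v$. It does not need the fact that the free $T^{m-1}$-action makes $X\to T_{\mathrm{pin}}$ a principal bundle over a non-manifold base. Nor does it need the identification of $T_{\mathrm{pin}}$. It is also the same path-in-$T^m$ mechanism the paper itself uses for non-loop edges in Lemma~\ref{section of graph}. Since it works at a vertex of any valence (using the chart section based at a prescribed point of $\pi^{-1}(v)$), it would handle loops and intervals uniformly. It could even bypass Lemma~\ref{preimage of edge} and Proposition~\ref{lem11.2} in that proof. The paper's route, in exchange, gives a conceptual picture and explains the absence of any obstruction cohomologically.

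\textbf{Two wording points.} First, the orbit over $v$ is $T^m/T_v\cong T^{m-1}$, which is not fixed unless $m=1$. Second, what extends continuously to the collar endpoints is the section into $X$. Its expression as a map into $T^m$ via the trivialization over $e$ need not extend. Neither point affects the argument.
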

\begin{proof}
Let $v$ denote the unique vertex of the loop.
By examining the isotropy subgroups at each point, we see that a
$1$-dimensional subtorus $T_v \subset T^m$ occurs as the isotropy subgroup over $v$,
whereas the isotropy subgroup over every point of $Q\setminus \{v\}$ is the trivial subgroup $\{1\}$
(see \cite[Proposition 5.6]{KK26}).
Hence $T^m/T_v \cong T^{m-1}$ acts freely on $X$.
Let
\[
  T_{\mathrm{pin}} := X/(T^m/T_v)
\]
denote the orbit space of this free action, and let
\[
  \pi' \colon X \to T_{\mathrm{pin}}
\]
be the orbit projection.
The residual $T_v$-action on $T_{\mathrm{pin}}$ induces an orbit projection
\[
  \rho \colon T_{\mathrm{pin}} \to Q
\]
satisfying $\pi = \rho \circ \pi'$.
The situation may be summarized in the following commutative diagram:
\[
\begin{tikzcd}[ampersand replacement=\&]
 \&
 X  \ar{dl}{\pi'}[']{/(T/T_v)}   \ar{dr}{/T}[']{\pi}
 \&
 \\
   T_{\mathrm{pin}}   \ar{rr}{\rho}[']{/T_v}
   \&
   \&
    Q
\end{tikzcd}
\]
Let $\mathring{e}:=Q\setminus \{v\}$ denote the open edge of the loop.
Since $\mathring{e}$ is homeomorphic to $(0,1)$ and the isotropy subgroup over each point of $\mathring{e}$ is $\{1\}$, the restriction
$\rho^{-1}(\mathring{e}) \to \mathring{e}$
is a principal $T_v$-bundle over $(0,1)$.
Because $(0,1)$ is contractible, this bundle is trivial, and therefore
\[
  \rho^{-1}(\mathring{e}) \cong (0,1)\times T_v,
\]
which is homeomorphic to an open cylinder.
On the other hand, the fiber $\rho^{-1}(v)$ consists of a single point.
Consequently, $T_{\mathrm{pin}}$ is homeomorphic to the one-point compactification of an open cylinder, namely a pinched torus (see Figure~\ref{fig: pinched torus}).
\begin{figure}[htbp]
\centering
\begin{tikzpicture}[scale=0.5]
\fill[gray!20] (0,0) arc[start angle=90, end angle=450, x radius=3cm, y radius=3cm];
\fill[white] (0,0) arc[start angle=90, end angle=450, x radius=2cm, y radius=2cm];
\draw (0,0) arc[start angle=90, end angle=450, x radius=3cm, y radius=3cm];
\draw (0,0) arc[start angle=90, end angle=450, x radius=2cm, y radius=2cm];
\draw[dashed] (0,-4) arc[start angle=90, end angle=450, x radius=0.25cm, y radius=1cm];
\draw (0,-4) arc[start angle=90, end angle=270, x radius=0.25cm, y radius=1cm];
\fill (0,0) circle (2pt);
\node[above] at (0,0) {$\rho^{-1}(v)$};
\end{tikzpicture}
\caption{Pinched torus $T_{\mathrm{pin}}$}
\label{fig: pinched torus}
\end{figure}
It follows that $\rho \colon T_{\mathrm{pin}} \to Q$ admits a section
\[
  \overline{s} \colon Q \to T_{\mathrm{pin}}.
\]

We next consider the pull-back of the principal $T^{m-1}$-bundle $\pi' \colon X \to T_{\mathrm{pin}}$ along $\overline{s}$:
\[
  \overline{s}^{*}X
  =
  \{(u, x)\in Q \times X \mid \overline{s}(u)=\pi'(x)\}.
\]
This yields the pull-back diagram
\[
  \begin{tikzcd}[ampersand replacement=\&]
    \overline{s}^{\ast}X \ar{r}{\Phi}  \ar{d}{q}
    \&
    X   \ar{d}{\pi'}
    \\
    Q \ar{r}{\overline{s}}
    \&
    T_{\mathrm{pin}}
  \end{tikzcd}
\]
where $\Phi(u,x)=x$.
Since
$H^2(Q, \mathbb{Z}^{m-1}) = 0$,
the principal $T^{m-1}$-bundle
$q \colon \overline{s}^{*}X \to Q$
is trivial.
Hence it admits a section
\[
  \sigma \colon Q \to \overline{s}^{*}X.
\]
Define
\[
  s := \Phi \circ \sigma \colon Q \to X.
\]
We claim that $s$ is a section of $\pi$.
For each $u\in Q$, write
$
  \sigma(u)=(u,x_u)\in \overline{s}^{*}X.
$
By the definition of $\overline{s}^{*}X$, we have
$
  \overline{s}(u)=\pi'(x_u).
$
Since $s(u)=\Phi(\sigma(u))=x_u$, it follows that
$
  \pi'(s(u))
  =
  \pi'(x_u)
  =
  \overline{s}(u).
$
Thus
$
  \pi' \circ s = \overline{s}.
$
Combining this with $\rho \circ \overline{s}=\mathrm{id}_Q$ and
$\pi=\rho\circ\pi'$, we obtain
\[
  \pi \circ s
  =
  \rho \circ \pi' \circ s
  =
  \rho \circ \overline{s}
  =
  \mathrm{id}_Q.
\]
Therefore $s$ is a section of $\pi$.
\end{proof}

\begin{lem}\label{preimage of edge}
  Let $X$ be a locally standard $T$-pseudomanifold over a graph $Q$, and let
  $e \subset Q$ be an edge with two distinct vertices. Then $\pi^{-1}(e)$ is a
  locally standard $T$-pseudomanifold.
\end{lem}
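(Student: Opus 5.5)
The plan is to verify Definition~\ref{def of T-pseudomanifold} directly for $Y:=\pi^{-1}(e)$ with $(l,n)=(0,1)$. Here $e$ is the closed edge, homeomorphic to $[0,1]$, with distinct endpoints $v_0,v_1$. The filtration on $Y$ is the one induced by orbit dimension:
\[
Y_{m+1}=Y,\qquad Y_{m-1}=\pi^{-1}(v_0)\sqcup\pi^{-1}(v_1).
\]

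First I would record the basic properties of $Y$. It is closed in the compact Hausdorff, second-countable space $X$, so it has the same properties. It is $T^m$-invariant. The action is effective because $Y$ contains free orbits over the open edge $\mathring e$. Every isotropy group is a subtorus, being one of $\{1\}$, $T_{v_0}$, $T_{v_1}$. Conditions (\ref{cond-2}) and (\ref{cond-3}) are then immediate, since orbits over $\mathring e$ are free and orbits over the vertices have dimension $m-1$.

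Next come the local charts. For $x\in\pi^{-1}(\mathring e)$, the set $\pi^{-1}(\mathring e)$ is open in $X$, so I restrict a chart of $X$ at $x$ to this open set. Since $\mathring e\cong(0,1)$ is contractible, $\pi^{-1}(\mathring e)\cong(0,1)\times T^m$, which gives a chart of the required form $\Omega\times U(1)^{m-1}\times\cone(\emptyset)$ with $\Omega\subset\mathbb C^\times$ an annulus.

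For $x\in\pi^{-1}(v_0)$, I take the chart $\varphi_x:U_x\to(\Omega'\times U(1)^{m-1})\times\cone(L_x)$ of $X$, where $\Omega'$ is a point-orbit factor ($i=0$, so it is just $U(1)^{m-1}$) and $L_x$ is a $1$-dimensional compact locally standard $T_{v_0}$-pseudomanifold. By Proposition~\ref{1-dim} and connectivity considerations, $L_x$ is a finite disjoint union of circles $T_{v_0}$. Each circle corresponds to a germ of an edge at $v_0$, because $U_x/T\cong\cone(L_x/T_{v_0})$ is a cone on finitely many points. Exactly one of these circles, call it $C$, corresponds to the germ of $e$. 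This uses that $e$ has two distinct vertices: if $e$ were a loop, both ends of $e$ would enter $v_0$ and two circles would be involved. Restricting $\varphi_x$ to $\pi^{-1}(e)\cap U_x$ then gives a weakly equivariant homeomorphism onto $U(1)^{m-1}\times\cone(C)$, where $\cone(C)\subset\cone(L_x)$ is the subcone. After shrinking $U_x$ so that its image in $Q$ meets only the edges incident to $v_0$, this restriction is an open neighborhood of $x$ in $Y$ of the required form, with link $C\cong T_{v_0}$, a $1$-dimensional locally standard $T_{v_0}$-pseudomanifold.

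Finally, the pseudomanifold condition (\ref{cond-1}) for $(Y,\mathfrak Y)$ follows from these same charts. The strata are $\pi^{-1}(\mathring e)$, a $(m+1)$-manifold, and $\pi^{-1}(v_j)\cong T^m/T_{v_j}$, an $(m-1)$-manifold. Density of the top stratum holds because $v_j\in\overline{\mathring e}$.

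The main obstacle is the identification of the sub-cone. One must check that the circles of $L_x$ biject with the edge-germs at $v_0$, and that $\pi^{-1}(e)\cap U_x$ is exactly $U(1)^{m-1}\times\cone(C)$ with the compatible filtration. I would settle this by passing to orbit spaces: $\cone(L_x)/T_{v_0}=\cone(L_x/T_{v_0})$, and $L_x/T_{v_0}$ is a finite set of points, one for each circle. I would also use the graph structure of $Q$ from Lemma~\ref{structure of graph}, to pick $U_x$ small in the sense of Remark~\ref{rem small nbh}.
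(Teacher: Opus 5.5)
Your proposal is correct and follows essentially the same route as the paper. At free points you use triviality of the bundle over the contractible open edge. At a vertex you restrict the chart $\varphi_x$ to the preimage of the unique prong of $U_x/T\cong\cone(L_x/T_{v_0})$ lying in $e$, which yields $U(1)^{m-1}\times\cone(T^1)$. You are slightly more explicit than the paper about why exactly one circle of $L_x$ meets $e$, and this is precisely where the distinctness of the two vertices is used.
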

\begin{proof}
It is straightforward to verify that $\pi^{-1}(e)$ is a topological stratified pseudomanifold.
Let $x \in \pi^{-1}(e)$.
Since $X$ is a locally standard $T$-pseudomanifold, there exists a chart
$
  (U_x, L_x, \varphi_x)
$
around $x$ in $X$.
We verify the conditions in Definition~\ref{def of T-pseudomanifold}.
We first suppose that
$
  x \in \pi^{-1}(e)\cap \mathring{X}
  =
  \pi^{-1}(\mathring{e}),
$
where $\mathring{e}$ denotes the relative interior of $e$.
Since the isotropy subgroup is trivial over $\mathring{e}$,
$
  \pi^{-1}(\mathring{e}) \to \mathring{e}
$
is a principal $T$-bundle.
As $\mathring{e}$ is contractible, this bundle is trivial.
Hence there exists a $T$-invariant open neighborhood
$
  U_x \subset \pi^{-1}(e)\cap \mathring{X}
$
of $x$.
In this case, we have $L_x=\emptyset$.

It remains to consider the case
$
  x \in \pi^{-1}(e)\cap X_{m-1}.
$
By Definition~\ref{def of T-pseudomanifold}, the link $L_x$ is a
$1$-dimensional locally standard $T^1$-pseudomanifold.
Hence
$
  L_x \cong T^1 \sqcup \cdots \sqcup T^1
$
by Proposition~\ref{1-dim}.
Moreover, the orbit space $L_x/T^1$ is a finite discrete set.
By \cite[Section 3]{Pop00} and \cite[Proposition 4.1]{KK26}, the following diagram is commutative, and a chart around $\pi(x)\in Q$ is given by
$
  (U_x/T, L_x/T_x, \varphi_x/T)
$
:
\[
\begin{tikzcd}[ampersand replacement=\&]
U_x   \ar{r}{\varphi_x}[']{\cong}
  \ar{d}{\pi|_{U_x}}
 \&
 U(1)^{m-1} \times \cone(T^1 \sqcup \cdots \sqcup T^1)
   \ar{d}{/U(1)^{m-1} \times T^1}
 \\
U_x/T      \ar{r}{\varphi_x/T}[']{\cong}
 \&
 \mathbb{R}^0 \times \cone (\ast \sqcup \cdots \sqcup \ast)
\end{tikzcd}
\]
Restricting $\varphi_x/T$ to the edge $e$, we obtain a homeomorphism
\[
(\varphi_x/T)|_{e \cap (U_x/T)}
:
e \cap (U_x/T)
\xrightarrow{\cong}
\mathbb{R}^0 \times \cone (\ast).
\]
Hence, by taking inverse images under the orbit projections and using the commutativity of the above diagram, we obtain a weakly $T$-equivariant homeomorphism
\[
\varphi_x |_{\pi^{-1}(e) \cap U_x}
:
\pi^{-1}(e) \cap U_x
\xrightarrow{\cong}
U(1)^{m-1} \times \cone (T^1).
\]
\end{proof}

We now construct a section by using the above lemmas.

\begin{lem}[section over a graph]\label{section of graph}
  Let $Q$ be a finite graph (possibly with loops and multiple edges), and let
  $X$ be a locally standard $T$-pseudomanifold over $Q$. Then there exists a
  section $s \colon Q \to X$.
\end{lem}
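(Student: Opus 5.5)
We will construct $s$ edge by edge: first choose the values of $s$ at the vertices, then extend over each edge. The two lemmas above handle the two kinds of edges. Recall that over each vertex $v$ the isotropy subgroup is a $1$-dimensional subtorus $T_v$. Over each open edge the action is free, by \cite[Proposition 5.6]{KK26}.

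First, fix for every vertex $v$ of $Q$ a point $x_v\in\pi^{-1}(v)$; the orbit $\pi^{-1}(v)\cong T^m/T_v$ is nonempty. Next, take an edge $e$ with two distinct endpoints $v_0,v_1$. By Lemma~\ref{preimage of edge}, $\pi^{-1}(e)$ is a locally standard $T$-pseudomanifold over the interval $e$. Its orbit space $e\cong[0,1]$ is contractible, so in particular $H^2(e;\mathbb{Z}^m)=0$ and the homotopy equivalence condition plausibly holds. Proposition~\ref{lem11.2} then gives an equivariant homeomorphism $\pi^{-1}(e)\cong X(e,\lambda|_e,0)=e\times T^m/{\sim}$. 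Alternatively one can argue directly with the free bundle over $\mathring e\cong(0,1)$ and the cone charts at the ends. Either way, $\pi^{-1}(e)\to e$ admits a section $s_e$, for instance $u\mapsto[u,1]$.

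The key point is to adjust $s_e$ so that $s_e(v_i)=x_{v_i}$ for $i=0,1$. Write $x_{v_i}=t_i\cdot s_e(v_i)$ with $t_i\in T^m$. Choose a path $\gamma\colon e\to T^m$ with $\gamma(v_0)=t_0$ and $\gamma(v_1)=t_1$, which is possible since $T^m$ is path connected. Then $u\mapsto\gamma(u)\cdot s_e(u)$ is continuous and is again a section, because the torus action preserves fibers of $\pi$. Loops are treated using Lemma~\ref{section of a loop}, applied to $\pi^{-1}(e)$ for a loop $e$ at a vertex $v$. One needs that $\pi^{-1}(e)$ is a locally standard $T$-pseudomanifold over a single loop, which follows by the same argument as in Lemma~\ref{preimage of edge}: the link at $v$ restricts to two circles. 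The resulting section takes a single value at $v$, and translating by a constant element of $T^m$ makes it pass through $x_v$.

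Finally, define $s$ on $Q$ as the union of the adjusted edge sections $s_e$. These agree at shared vertices, where all of them equal $x_v$. Since $Q$ is a finite union of the closed edges, each closed in $Q$, the pasting lemma gives continuity of $s$ and $\pi\circ s=\mathrm{id}_Q$. Isolated vertices, if any, are covered by the values $x_v$. The main obstacle is the edge case with distinct vertices. There one must justify that $\pi^{-1}(e)$ has a section, either through the canonical model (checking the homotopy equivalence condition for an interval) or by gluing the trivialization over $(0,1)$ to the cone neighborhoods at the ends. The latter works because in the local model $U(1)^{m-1}\times\cone(T^1)$ a section $u\mapsto[(1,u)]$ near the cone point is continuous.
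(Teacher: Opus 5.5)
Your proposal is correct and follows essentially the same route as the paper. You fix base points over the vertices, and for an edge with distinct endpoints you use Lemma~\ref{preimage of edge} and Proposition~\ref{lem11.2} to identify $\pi^{-1}(e)\cong e\times T^m/{\sim}$, then correct the endpoint values with a path $\gamma$ in $T^m$. You handle loops with Lemma~\ref{section of a loop} and glue by the pasting lemma. Your hedge ``plausibly'' is unnecessary: the homotopy equivalence condition holds because both $\mathring e$ and $e$ are contractible. Your remarks that $\pi^{-1}(e)$ over a loop is itself a locally standard $T$-pseudomanifold, and that the loop section can be translated by a constant in $T^m$ to pass through $x_v$, make explicit details the paper leaves implicit.
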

\begin{proof}
Let $\{v_1,\ldots,v_N\}$ be the vertex set of $Q$.
For each $1\le i\le N$, choose and fix a point
\[
  x_i \in \pi^{-1}(v_i).
\]
For each edge $e\subset Q$, let $v_i$ and $v_j$ denote its endpoints
(where $v_i=v_j$ if $e$ is a loop).
We shall construct a section
\[
  s_e \colon e \to \pi^{-1}(e)
\]
satisfying
\[
  s_e(v_i)=x_i,
  \quad
  s_e(v_j)=x_j.
\]
If $e$ is a loop, then such a section exists by Lemma~\ref{section of a loop}.
We next consider the case where $e$ is homeomorphic to a closed interval.
Identifying $e$ with $[0,1]$, we assume that $0$ and $1$ correspond to
$v_i$ and $v_j$, respectively.
By Lemma~\ref{preimage of edge}, the space $\pi^{-1}(e)$ is a locally standard
$T$-pseudomanifold over $e$.
Since $\pi^{-1}(e)$ satisfies the homotopy equivalence condition,
Proposition~\ref{lem11.2} yields a $T$-equivariant homeomorphism
\[
  \pi^{-1}(e)
  \cong
  e\times T^m/{\sim_{\lambda|_e}}
  \cong
  [0,1]\times T^m/{\sim_{\lambda|_e}},
\]
where $\lambda|_e$ denotes the restriction of the characteristic function
$\lambda$ to $e$.
Since $x_i,x_j\in \pi^{-1}(e)$ lie in the fibers over $v_i,v_j$,
respectively, they correspond to points
\[
  [0,t_{v_i}],
  \quad
  [1,t_{v_j}]
\]
for some $t_{v_i},t_{v_j}\in T^m$.
Because $T^m$ is path-connected, there exists a continuous path
$
  \gamma \colon [0,1]\to T^m
$
such that
\[
  \gamma(0)=t_{v_i},
  \quad
  \gamma(1)=t_{v_j}.
\]
Using $\gamma$, define a map
\[
  s'_e \colon [0,1]\to [0,1]\times T^m/{\sim}
\]
by
\[
  s'_e(u):=[u,\gamma(u)],
  \quad (u\in [0,1]).
\]
Then $s'_e$ is a section of
$
  [0,1]\times T^m/{\sim}\to [0,1].
$
Consequently, we obtain a section
\[
  s_e
  \colon
  e
  \xrightarrow{\cong}
  [0,1]
  \xrightarrow{s'_e}
  [0,1]\times T^m/{\sim}
  \xrightarrow{\cong}
  \pi^{-1}(e),
\]
which satisfies
\[
  s_e(v_i)=x_i,
  \quad
  s_e(v_j)=x_j.
\]

We next glue the sections $s_e$ over all edges $e\subset Q$.
Define
\[
  s(p):=s_e(p)
  \qquad (p\in e).
\]
Since each section $s_e$ was constructed so as to satisfy
$s_e(v_i)=x_i$ at every vertex $v_i$, the map $s$ is well-defined.
Moreover,
$
  \pi\circ s=\mathrm{id}_Q.
$
Because each edge is closed in $Q$, the gluing lemma implies that
$s$ is continuous.
Therefore $s$ is a section of $\pi$.
\end{proof}

We next show that a locally standard $T$-pseudomanifold over a graph admits a canonical model description.

\begin{prop}\label{canonical model of graph}
  For a locally standard $T$-pseudomanifold $X$ over a graph $Q$, the following
  equivariant homeomorphisms hold:
  \[
  X \cong Q \times T/{\sim_\lambda} \cong X(Q, \lambda, 0).
  \]
\end{prop}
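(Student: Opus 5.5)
We will use the section $s\colon Q\to X$ from Lemma~\ref{section of graph} to define
\[
  F\colon Q\times T^m \to X,\qquad F(p,t):=t\cdot s(p),
\]
and show that it descends to an equivariant homeomorphism $\overline{F}\colon Q\times T^m/{\sim_\lambda}\to X$. Since $Q$ is a graph, $H^2(Q;\mathbb{Z}^m)=0$. By Remark~\ref{face acyclic case}, $Q\times T^m/{\sim_\lambda}$ is then exactly $X(Q,\lambda,0)$, which gives the second homeomorphism in the statement. Thus the whole argument reduces to analysing $F$.

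The first step is to check that $\overline{F}$ is a well-defined bijection. The map $F$ is continuous and $T^m$-equivariant, and it is surjective because every orbit $\pi^{-1}(p)$ contains $s(p)$. Suppose $F(p,t)=F(q,t')$. Applying $\pi$ gives $p=q$, and then $t^{-1}t'$ lies in the isotropy subgroup of $s(p)$. This isotropy subgroup depends only on the stratum $S\ni p$ and equals $\lambda(S)$ by the construction of the characteristic functor in Subsection~\ref{sec 2.3}. Hence the fibres of $F$ are exactly the $\sim_\lambda$-classes. It follows that $F$ factors through a continuous equivariant bijection $\overline{F}$.

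The second step is to show that $\overline{F}$ is a homeomorphism. Since $Q$ and $T^m$ are compact, the quotient $Q\times T^m/{\sim_\lambda}$ is compact, and $X$ is Hausdorff. A continuous bijection from a compact space to a Hausdorff space is a homeomorphism, so $\overline{F}$ is one.

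The step needing the most care is the identification of the isotropy group of $s(p)$ with $\lambda(S)$, uniformly over all points of each stratum, including vertices with several incident edges and loops. This is precisely the statement that the isotropy group depends only on the stratum, as recalled in Subsection~\ref{sec 2.3} (\cite[Proposition 5.6]{KK26}), so it can be cited rather than reproved. Note also that the homotopy equivalence condition is never used: the global section replaces Proposition~\ref{lem11.2}.
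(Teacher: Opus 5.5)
Your proof is correct and follows the paper's route: the section from Lemma~\ref{section of graph} yields $X\cong Q\times T/{\sim_\lambda}$, and $H^2(Q;\mathbb{Z}^m)=0$ together with Remark~\ref{face acyclic case} identifies this quotient with $X(Q,\lambda,0)$. The only difference is in the first homeomorphism: the paper cites \cite[Proposition 12.20]{BH99}, whereas you prove it directly via $F(p,t)=t\cdot s(p)$, checking that its fibres are the $\sim_\lambda$-classes and then using that a continuous bijection from a compact space to a Hausdorff space is a homeomorphism, which is valid.
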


\begin{proof}
By Lemma~\ref{section of graph}, the orbit projection $\pi:X\to Q$ admits a section.
Hence, by \cite[Proposition 12.20]{BH99}, we obtain a $T$-equivariant homeomorphism
$X \cong Q \times T/{\sim_\lambda}$.
Moreover, it follows from Remark~\ref{face acyclic case} that $Q \times T/{\sim_\lambda} \cong X(Q, \lambda, 0)$.
\end{proof}

We finally state the classification theorem for locally standard $T$-pseudomanifolds over graphs.

\begin{thm}\label{classification over graph}
  Let $X$ and $X'$ be locally standard $T$-pseudomanifolds
  over graphs, and let $(Q, \lambda, 0)$ and $(Q', \lambda', 0)$ be their
  respective characteristic data. Then the following two statements are
  equivalent:
  \begin{enumerate}
    \item $(Q, \lambda, 0)$ and $(Q', \lambda', 0)$ are (weakly) isomorphic (see Definition~\ref{def of isomorphism of char pairs} and Remark~\ref{rem free chern});
    \item $X$ and $X'$ are (weakly) equivariantly homeomorphic.
  \end{enumerate}
\end{thm}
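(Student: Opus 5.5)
The direction (2)$\Rightarrow$(1) needs no new argument. By Remark~\ref{rem free chern}, a (weak) equivariant homeomorphism $X\to X'$ induces a (weak) isomorphism of the characteristic data $(Q,\lambda,c^{\mathrm{free}})$ and $(Q',\lambda',(c^{\mathrm{free}})')$, with $\psi$ the automorphism twisting the action. For a graph, $\mathring Q$ is a disjoint union of open intervals, so $H^2(\mathring Q;\mathbb{Z}^m)=0$. Hence both free Chern classes vanish and condition (1)' holds automatically. Likewise $H^2(Q;\mathbb{Z}^m)=0$ for a $1$-dimensional CW complex, so condition (1) also holds trivially. Either way we get (1) with the data written as $(Q,\lambda,0)$.

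For (1)$\Rightarrow$(2), Proposition~\ref{canonical model of graph} reduces the problem to the canonical models. It suffices to show that a (weak) isomorphism $f\colon Q\to Q'$ with automorphism $\psi\colon T^m\to T^m$ (the identity in the non-weak case) induces a (weakly) equivariant homeomorphism
\[
F\colon Q\times T^m/{\sim_\lambda}\to Q'\times T^m/{\sim_{\lambda'}},\qquad F([p,t])=[f(p),\psi(t)].
\]
The steps are as follows.

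\begin{enumerate}
\item \emph{Well-definedness.} Suppose $(p,t)\sim_\lambda(p,s)$ with $p\in S$, so $t^{-1}s\in\lambda(S)$. Then $\psi(t)^{-1}\psi(s)=\psi(t^{-1}s)\in\psi(\lambda(S))=\lambda'(\mathcal{S}(f)(S))$, and $f(p)\in\mathcal{S}(f)(S)$ because $f$ is stratified. This is exactly the commutativity of \eqref{comm diagram: weak isomorphism}.
\item \emph{Continuity and equivariance.} $F$ is continuous since $f\times\psi$ is continuous and descends through the quotient. It is $\psi$-equivariant: $F(u\cdot x)=\psi(u)\cdot F(x)$.
\item \emph{Inverse.} The same construction applied to $f^{-1}$ and $\psi^{-1}$ gives a continuous inverse, because the diagram also commutes for the inverse pair.
\end{enumerate}

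Composing $F$ with the equivariant homeomorphisms $X\cong X(Q,\lambda,0)$ and $X'\cong X(Q',\lambda',0)$ yields the desired (weak) equivariant homeomorphism.

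I do not expect a serious obstacle, since the heavy lifting (existence of a section, hence canonical model) is already done in Proposition~\ref{canonical model of graph}. The one point to check carefully is that $f$ maps each stratum $S$ into $\mathcal{S}(f)(S)$, so that the isotropy data match pointwise. This follows from $f$ being a stratified homeomorphism. Since $Q$ is compact and the spaces are Hausdorff, bijectivity plus continuity alone would already give a homeomorphism, providing a backup to constructing the inverse directly.
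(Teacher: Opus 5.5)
Your proposal is correct and takes essentially the same approach as the paper. For (1)$\Rightarrow$(2) you reduce to canonical models via Proposition~\ref{canonical model of graph} and then identify the two canonical models. The paper cites \cite[Proposition 9.1]{KK26} for that last step, whereas you verify it directly with $F([p,t])=[f(p),\psi(t)]$; (2)$\Rightarrow$(1) follows from Remark~\ref{rem free chern} in both.
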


\begin{proof}
Assume that $(Q,\lambda,0)$ and $(Q',\lambda',0)$ are (weakly) isomorphic.
Then, by \cite[Proposition 9.1]{KK26}, there exists a (weakly) $T$-equivariant homeomorphism
$X(Q,  \lambda,  0) \cong X(Q',  \lambda',  0)$.
Combining this with Proposition~\ref{canonical model of graph}, we obtain a (weakly) $T$-equivariant homeomorphism
$X \cong X'$.

The converse follows from Remark~\ref{rem free chern}.  
\end{proof}

\subsection{Locally standard $T$-pseudomanifolds with semi-trivial filtrations}\label{subsection semi-trivial}

When $l=m=n=1$, the filtration of $X$ takes the form
\[
X_3 \supset X_1 \supset \emptyset.
\]
Thus, $X$ consists only of free orbits and fixed points, which implies that the $T$-action is semi-free.
In this subsection, we study locally standard $T$-pseudomanifolds with such {\it semi-trivial filtrations} and establish a classification theorem for them.
We begin by generalizing this filtration structure to arbitrary dimensions.

\begin{definition}[semi-trivial filtration]\label{def of semi-trivial filtration}
  Suppose $m = n$, and let $(X, \mathfrak{X})$ be a $(l+2n)$-dimensional locally
  standard $T$-pseudomanifold. The filtration by orbit dimension $\mathfrak{X}$
  is called \emph{semi-trivial} if
  \[
  \mathfrak{X} \colon X_{l+2n} \supset X_l \supset \emptyset.
  \]
\end{definition}
\begin{rem}
  When $n=0$, the filtration of a locally standard $T$-pseudomanifold takes the form $X_{l+m} \supset \emptyset$. Such a filtration is called a \emph{trivial filtration} (see \cite{Fri20}), and in this case, the $T$-action is free.
  In Definition~\ref{def of semi-trivial filtration} above, the $T$-action is semi-free. Reflecting this property, we use the term \emph{semi-trivial}.
\end{rem}

Let $(X, \mathfrak{X})$ be a locally standard $T$-pseudomanifold with a semi-trivial filtration.
Then, the filtered orbit space $(Q, \mathfrak{Q})$ has the filtration $\mathfrak{Q}: Q_{l+n} \supset Q_l \supset \emptyset$.
The characteristic functor $\lambda$ on $Q$ is uniquely determined as the one assigning the torus $T^n$ to the $l$-strata and the trivial subgroup $\{1\}$ to the $(l+n)$-strata.
We denote this characteristic functor by $\lambda=\{1, T\}$.
Let $\mathring{X}:=X \setminus X_{l}$ and $\mathring{Q}:=Q \setminus Q_l$.
Restricting the orbit projection $\pi: X \to Q$ to the free orbits $\mathring{X}$ yields a principal $T^m$-bundle $\pi|_{\mathring{Q}}: \mathring{X} \to \mathring{Q}$.
We denote its Chern class by $c^{\mathrm{free}} \in H^2(\mathring{Q};\mathbb{Z}^m)$.
The triple $(Q, \{1, T\}, c^{\mathrm{free}})$ is called the \emph{characteristic data of the locally standard $T$-pseudomanifold with a semi-trivial filtration}.

To construct a canonical model from this data, we require the following condition on the cohomology of the orbit space.
\begin{cond}\label{cond surjective}
  The homomorphism
  \[
  \iota^{\ast} \colon H^2(Q; \mathbb{Z}^m) \to H^2(\mathring{Q}; \mathbb{Z}^m)
  \]
  induced by the inclusion map $\iota \colon \mathring{Q} \to Q$ is surjective.
\end{cond}

\begin{rem}
  This condition is satisfied when $H^2(\mathring{Q}; \mathbb{Z}^m) = 0$.
  More generally, the following argument holds whenever the preimage of
  $c^{\mathrm{free}} \in H^2(\mathring{Q}; \mathbb{Z}^m)$ under $\iota^{\ast}$
  exists, but we do not pursue this generality here.
\end{rem}

Under this assumption, we construct a canonical model from characteristic data.

\begin{cons}\label{cons of canonical model}
  Let $X$ be a locally standard $T$-pseudomanifold with a semi-trivial filtration,
  and let $(Q, \{1,T\}, c^{\mathrm{free}})$ be its characteristic data.
By the surjectivity of $\iota^{\ast}$, we may take
  \[
  c \in (\iota^{\ast})^{-1}(c^{\mathrm{free}}) \subset H^2(Q; \mathbb{Z}^m).
  \]
Let $P_c$ denote the principal $T^m$-bundle over $Q$ whose Chern class is $c$.
The restriction of $P_c$ to $\mathring{Q}$ is bundle isomorphic to $\mathring{X}$.
We define the canonical model by
  \[
  X(Q, \{1,T\}, c^{\mathrm{free}}) := X(Q, \{1,T\}, c) = P_c / {\sim_{\{1,T\}}},
  \]
  where $\sim_{\{1,T\}}$ is the equivalence relation collapsing the fibers over
  $Q_l$ (see also Definition~\ref{def of canonical model} for the canonical model $X(Q, \{1,T\}, c)$). By \cite[Theorem 12.1]{KK26}, $X(Q, \{1,T\}, c)$ is a locally standard
  $T$-pseudomanifold. The filtration of $X(Q, \{1,T\}, c^{\mathrm{free}}) =
  X(Q, \{1,T\}, c)$ is given by
  \[
  X(Q, \{1,T\}, c) \supset X(Q_l, \{1,T\}, c) \supset \emptyset,
  \]
  where $X(Q_l, \{1,T\}, c) \cong Q_l$. We also set
  $X(\mathring{Q}, \{1,T\}, c) := X(Q, \{1,T\}, c) \setminus X(Q_l, \{1,T\}, c)$.
\end{cons}

\begin{rem}
As will be shown later in Proposition~\ref{key lem}, we have
$X \cong X(Q, \{1, T\}, c^{\mathrm{free}})$.
Therefore, the equivariant homeomorphism type of
$X(Q, \{1,T\}, c^{\mathrm{free}})$ does not depend on the choice of
$c \in (\iota^{\ast})^{-1}(c^{\mathrm{free}})$.
\end{rem}

To show $X \cong X(Q, \{1,T\}, c^{\mathrm{free}})$,
we construct the following map.

\begin{cons}\label{cons of h}
  Let $X$ be a locally standard $T$-pseudomanifold with a semi-trivial filtration,
  let $(Q, \{1,T\}, c^{\mathrm{free}})$ be its characteristic data, and let
  $X(Q, \{1,T\}, c) = X(Q, \{1,T\}, c^{\mathrm{free}})$ be its canonical model.
  Let $\pi \colon X \to Q$ and $\tau \colon X(Q, \{1,T\}, c) \to Q$ be the
  respective orbit projections. We construct a map
  \[
  h \colon X \to X(Q, \{1,T\}, c)
  \]
  as follows. Since $\mathring{X} \cong P_c |_{\mathring{Q}} \cong X(\mathring{Q}, \{1,T\}, c)$, we may take
  an equivariant homeomorphism
  $\mathring{h} \colon \mathring{X} \to X(\mathring{Q}, \{1,T\}, c)$.
  Since $X_l$ and $X(Q_l, \{1,T\}, c)$ consist of fixed points, the restrictions
  of $\pi$ and $\tau$ to $Q_l$ are homeomorphisms, and we obtain a homeomorphism
  $\tau^{-1}|_{Q_l} \circ \pi|_{Q_l} \colon X_l \to X(Q_l, \{1,T\}, c)$.
  We define $h$ by
  \[
  h(x) :=
  \begin{cases}
    \mathring{h}(x), & (x \in \mathring{X}); \\
    \tau^{-1}|_{Q_l} \circ \pi|_{Q_l}(x), & (x \in X_l).
  \end{cases}
  \]
  By construction, $h$ is a bijection, and the following diagram commutes:
  \[
  \begin{tikzcd}[ampersand replacement=\&]
    X \ar{rr}{h} \ar{rd}{\pi}
    \&
    \&
    X(Q, \{1,T\}, c) \ar{ld}{\tau}
    \\
    \&
    Q
    \&
  \end{tikzcd}
  \]
  In particular,
  \begin{align}\label{correspond of invariant nbhd}
    h(\pi^{-1}(U)) = \tau^{-1}(U)
  \end{align}
  holds for any subset $U \subset Q$.
\end{cons}

To prove the continuity of $h$, we establish the following lemma.

\begin{lem}\label{lem of inv nbh}
  For any fixed point $x \in X(Q_l, \{1,T\}, c)$ and any open neighborhood
  $V_x$ of $x$, there exists a $T$-invariant open neighborhood $U_x$ of $x$
  contained in $V_x$.
\end{lem}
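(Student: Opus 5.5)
We prove the lemma with the standard tube-lemma argument for actions of a compact group, applied to the canonical model $Y:=X(Q,\{1,T\},c)=P_c/{\sim_{\{1,T\}}}$. Since $x$ is a fixed point, its orbit is $T\cdot x=\{x\}$. The action map $\mu\colon T\times Y\to Y$ is continuous, and the canonical $T$-action on $Y$ is induced from the action on $P_c$. Because $T$ acts on $x$ trivially, $\mu(T\times\{x\})=\{x\}\subset V_x$. Hence $T\times\{x\}\subset \mu^{-1}(V_x)$, which is open in $T\times Y$.

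Since $T$ is compact, the tube lemma gives an open neighborhood $W$ of $x$ with $T\times W\subset\mu^{-1}(V_x)$. In other words, $t\cdot W\subset V_x$ for every $t\in T$. We then set
\[
U_x:=\bigcap_{t\in T} t\cdot V_x .
\]
Rather than handle this intersection directly, it is cleaner to take
\[
U_x:=T\cdot W=\bigcup_{t\in T}t\cdot W .
\]
Each translate $t\cdot W$ is open, since $t$ acts by a homeomorphism, so $U_x$ is open. It is clearly $T$-invariant. It contains $x$, and by the choice of $W$ it is contained in $V_x$.

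The only point needing care is the continuity of the action on the quotient $Y$, since the tube lemma requires $\mu$ continuous on $T\times Y$. This is not automatic for quotient spaces. I would justify it in one of two ways:
\begin{enumerate}
\item By \cite[Theorem 12.1]{KK26}, $Y$ is a locally standard $T$-pseudomanifold, so by definition the $T$-action is continuous.
\item Directly: $T\times P_c\to T\times Y$ is a quotient map, because $T$ is locally compact Hausdorff and products of quotient maps with a locally compact Hausdorff space remain quotient maps.
\end{enumerate}
Once continuity is in hand, the remaining steps are routine.

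As an alternative that does not use the group action, one could work through the orbit projection $\tau$. Take $U_x=\tau^{-1}(U)$ for a small open neighborhood $U$ of $\tau(x)$ in $Q$, and check that $\tau^{-1}(U)\subset V_x$ once $U$ is small enough, using the compactness of the collapsed fibre and the properness of $\tau$. This is essentially the same tube argument. I would present the group-action version, since the $T$-invariance of $U_x$ is then immediate.
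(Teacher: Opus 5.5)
Your proposal is correct and is essentially the paper's proof: you apply the tube lemma to $T\times\{x\}\subset\mu^{-1}(V_x)$ and take $U_x=T\cdot W$, which is open as a union of translates $t\cdot W$. Your remark on continuity of the action on the quotient $P_c/{\sim}$ fills in a point the paper takes for granted; you should also delete the stray first definition $U_x:=\bigcap_{t\in T}t\cdot V_x$, since you never show that set is open.
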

\begin{proof}
Consider the torus action $\varphi:T \times X(Q,  \{1,T\},  c) \to X(Q,  \{1,T\},  c)$.
Since $x$ is a fixed point, we have 
  \[
  T \times \{ x \} \subset \varphi^{-1}(V_x).
  \]
The subset $\varphi^{-1}(V_x)$ is open in $T \times X(Q,\{1,T\},c)$, and $T$ is compact.
Hence, by the tube lemma, there exists an open neighborhood $W_x$ of $x$
in $X(Q,\{1,T\},c)$ such that
  \[
  T \times \{x\} \subset T \times W_x \subset \varphi^{-1}(V_x).
  \]
We define $U_x:=\varphi( T \times W_x)$.
Then $U_x \subset V_x$, and $U_x$ is $T$-invariant by construction.
It remains to verify that $U_x$ is open.
Since
  \[
  U_x=\varphi( T \times W_x)=T \cdot W_x=
  \bigcup_{t \in T}t \cdot W_x,
  \]
it suffices to show that each subset $t\cdot W_x$ is open.
For each $t\in T$, the map $\varphi_t:X(Q,  \{1,T\},  c) \to X(Q,  \{1,T\},  c)$ is a homeomorphism.
Therefore $\varphi_{t}(W_x)=t\cdot W_x$ is open.
\end{proof}

Using this lemma, we show that $h$ is continuous.

\begin{lem}\label{h is cont}
  The map $h \colon X \to X(Q, \{1,T\}, c)$ is continuous.
\end{lem}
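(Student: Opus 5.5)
We check continuity of $h$ pointwise, splitting into points of $\mathring{X}$ and fixed points in $X_l$.

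\emph{Free points.} $\mathring{X}=X\setminus X_l$ is open in $X$, because $X_l$ is closed. Likewise $X(\mathring{Q},\{1,T\},c)$ is open in $X(Q,\{1,T\},c)$. On $\mathring{X}$ the map $h$ agrees with the homeomorphism $\mathring{h}$. So $h$ is continuous at every $x\in\mathring{X}$.

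\emph{Fixed points.} Let $x\in X_l$ and $y:=h(x)\in X(Q_l,\{1,T\},c)$, and let $V_y$ be any open neighborhood of $y$. By Lemma~\ref{lem of inv nbh} there is a $T$-invariant open neighborhood $U_y\subset V_y$ of $y$. Set $U:=\tau(U_y)\subset Q$.

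Since $U_y$ is $T$-invariant, $\tau^{-1}(U)=U_y$. Orbit projections are open maps (the saturation $T\cdot W$ of an open $W$ is open), so $U$ is open in $Q$. Then $\pi^{-1}(U)$ is an open neighborhood of $x$ in $X$, since $\pi(x)=\tau(y)\in U$. By \eqref{correspond of invariant nbhd},
\[
h(\pi^{-1}(U))=\tau^{-1}(U)=U_y\subset V_y .
\]
This gives continuity of $h$ at $x$.

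The only delicate point is the fixed-point case. One must produce neighborhoods of $y$ that are saturated with respect to $\tau$, so that they pull back under $\pi$ through the commutative triangle of Construction~\ref{cons of h}. This is exactly what Lemma~\ref{lem of inv nbh} supplies, combined with openness of the orbit map $\tau$.

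Finally, if the argument later needs $h$ to be a homeomorphism, its inverse is handled the same way. For that one needs the analogue of Lemma~\ref{lem of inv nbh} in $X$, whose proof is identical (tube lemma plus compactness of $T$). Then $X$ compact and $X(Q,\{1,T\},c)$ Hausdorff already suffice.
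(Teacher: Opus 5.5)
Your proof is correct and follows the paper's argument. It uses the same split into free points, where $h$ agrees with $\mathring{h}$ on the open set $\mathring{X}$, and fixed points, where Lemma~\ref{lem of inv nbh} and the identity $h(\pi^{-1}(U))=\tau^{-1}(U)$ from \eqref{correspond of invariant nbhd} do the work. The paper instead checks that $h^{-1}(V_x)$ is open for small neighborhoods $V_x$ by writing $V_x=U_x\cup W$ with $W$ in the free part. Your pointwise formulation is slightly cleaner, because it never needs every fixed point of $V_x$ to lie in the single invariant set $U_x$, which that decomposition tacitly assumes.
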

\begin{proof}
To prove the continuity of $h$, it suffices to show that for every point
$x\in X(Q,\{1,T\},c)$ and every small open neighborhood $V_x$ of $x$
(see Remark~\ref{rem small nbh}), the preimage
$h^{-1}(V_x)$
is open in $X$.
We first suppose that
$x \in X(\mathring{Q},  \{1,T\},  c)$.
Since $V_x$ is small, we have
$V_x \subset X(\mathring{Q},  \{1,T\},  c)$.
By the definition of $h$, we obtain
 $h^{-1}(V_x) = \mathring{h}^{-1}(V_x)$.
  Since
$\mathring{h}^{-1}:   X(\mathring{Q},\{1,T\},c) \to
\mathring{X}$
 is a homeomorphism,
 the subset
  $\mathring{h}^{-1}(V_x)=h^{-1}(V_x)$
is open in $\mathring{X}$.
Because $\mathring{X}$ is open in $X$, it follows that
$h^{-1}(V_x)$
is open in $X$.

We next suppose that
$x \in X(Q_l,  \{1,T\},  c)$.
Then $x$ is a fixed point.
By Lemma~\ref{lem of inv nbh}, the neighborhood $V_x$ contains a
$T$-invariant open neighborhood $U_x$ of $x$.
Hence we may write
\[
  V_x = U_x \cup W,
\]
where $W$ is an open subset contained in
$
  X(\mathring{Q},\{1,T\},c).
$
By the previous argument,
$
  h^{-1}(W)
  =
  \mathring{h}^{-1}(W)
$
is open in $X$.
On the other hand, since $U_x$ is $T$-invariant, it can be written as
\[
  U_x=\tau^{-1}(U_p),
\]
where
$
  p:=\tau(x),
$
and $U_p$ is a small open neighborhood of $p$ in $Q$.
By \eqref{correspond of invariant nbhd},
\[
  h^{-1}(\tau^{-1}(U_p))
  =
  \pi^{-1}(U_p).
\]
Since $U_p$ is open in $Q$, the subset
$
  \pi^{-1}(U_p)
$
is open in $X$.
Therefore
\[
  h^{-1}(U_x)
\]
is open in $X$.
Consequently,
\[
  h^{-1}(V_x)
  =
  h^{-1}(U_x)\cup h^{-1}(W)
\]
is open in $X$.
Hence $h$ is continuous.
\end{proof}

We are now ready to show that the map $h$ is an equivariant homeomorphism.
\begin{prop}\label{key lem}
  Let $X$ be a locally standard $T$-pseudomanifold with a semi-trivial filtration
  satisfying Condition~\ref{cond surjective}, and let $(Q, \{1,T\}, c^{\mathrm{free}})$
  be its characteristic data. Then the following equivariant homeomorphism holds:
  \[
  X \cong X(Q, \{1,T\}, c^{\mathrm{free}}).
  \]
\end{prop}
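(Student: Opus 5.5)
The map $h\colon X\to X(Q,\{1,T\},c)$ from Construction~\ref{cons of h} will serve as the homeomorphism. By construction $h$ is a bijection. It is equivariant: on $\mathring{X}$ it equals the equivariant map $\mathring{h}$, and on $X_l$ both sides consist of fixed points. Lemma~\ref{h is cont} gives continuity. It therefore remains to show that $h^{-1}$ is continuous.

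The first route is point-set topology. $X$ is compact by the standing assumptions of Section~\ref{sec 2.2}. The canonical model $X(Q,\{1,T\},c)$ is Hausdorff, since by \cite[Theorem 12.1]{KK26} it is a locally standard $T$-pseudomanifold, and in particular a topological stratified pseudomanifold, which is Hausdorff by definition. A continuous bijection from a compact space onto a Hausdorff space is a homeomorphism, because it sends closed (hence compact) subsets to compact (hence closed) subsets. So $h$ is an equivariant homeomorphism, which gives $X\cong X(Q,\{1,T\},c)=X(Q,\{1,T\},c^{\mathrm{free}})$.

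The only point needing care is the Hausdorff property of the target, and I would check it independently of \cite[Theorem 12.1]{KK26}. The quotient $P_c/{\sim_{\{1,T\}}}$ only collapses the fibers over $Q_l$. Points in distinct $\tau$-fibers are separated by preimages of disjoint open sets of the Hausdorff space $Q$. Two distinct points in the same fiber over $\mathring{Q}$ lie in the open subset $\tau^{-1}(\mathring{Q})\cong P_c|_{\mathring{Q}}$, which is Hausdorff. Fibers over $Q_l$ are single points, so no further case arises. Compactness of $X$ (second-countable compact Hausdorff) is part of the definition, so no further work is needed. A direct alternative, showing that $h^{-1}$ is continuous at fixed points by mirroring the proof of Lemma~\ref{h is cont} with $X$ in place of the canonical model, would also work, but the compact-to-Hausdorff argument is shorter and is the one I would use.
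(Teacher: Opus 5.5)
Your proposal is correct and follows essentially the same route as the paper. The paper also obtains bijectivity from Construction~\ref{cons of h} and continuity from Lemma~\ref{h is cont}, then concludes via compact-to-Hausdorff, with equivariance coming from $\mathring{h}$ on the free part and from the fixed points on $X_l$. Your direct check that $X(Q,\{1,T\},c)$ is Hausdorff is sound; the paper simply asserts this property.
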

\begin{proof}
By Lemma~\ref{h is cont}, the bijective map $h$ is continuous.
Since $X$ is compact and $X(Q, \{1,T\}, c^{\mathrm{free}})$ is Hausdorff, it follows that $h$ is a homeomorphism.
It remains to show that $h$ is equivariant. By the definition of $h$, it suffices to verify that both $\mathring{h}$ and
$
\tau^{-1}|_{Q_l} \circ \pi|_{Q_l}
$
are equivariant.
The bundle isomorphism $\mathring{h}$ is equivariant by definition. Moreover, the map
\[
\tau^{-1}|_{Q_l} \circ \pi|_{Q_l} \colon X_l \to X(Q_l, \{1,T\}, c^{\mathrm{free}})
\]
is also equivariant, since both $X_l$ and $X(Q_l, \{1,T\}, c^{\mathrm{free}})$ consist of fixed points.
\end{proof}

As a corollary of this proposition, we obtain the case $(l, m, n)=(1, 1, 1)$.

\begin{cor}\label{canonical model of 111}
  Let $X$ be a $3$-dimensional locally standard $T$-pseudomanifold over a
  $2$-stratifold. Then $c^{\mathrm{free}} = 0$, and the following equivariant
  homeomorphisms hold:
  \[
  X \cong X(Q, \{1,T\}, 0) \cong Q \times T / {\sim_{\{1,T\}}}.
  \]
\end{cor}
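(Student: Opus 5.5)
The plan is to reduce the statement to Proposition~\ref{key lem}. For this we have to check two things: that $X$ has a semi-trivial filtration, and that Condition~\ref{cond surjective} holds. The latter will follow from showing $H^2(\mathring{Q};\mathbb{Z})=0$.

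\emph{Semi-trivial filtration.} For $(l,m,n)=(1,1,1)$ we have $m=n$. The filtration by orbit dimension is $X_3\supset X_1\supset\emptyset$, so Definition~\ref{def of semi-trivial filtration} applies. Since $m=1$, the characteristic functor is forced to be $\{1,T\}$ (Subsection~\ref{sec 3.1}).

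\emph{Vanishing of $H^2(\mathring{Q};\mathbb{Z})$.} Here $Q$ is a compact $2$-stratifold and $Q_1\cong S^1\sqcup\cdots\sqcup S^1$. The stratum $\mathring{Q}=Q\setminus Q_1$ is a $2$-dimensional topological manifold, possibly with several components. I claim that no component of $\mathring{Q}$ is compact.
\begin{itemize}
\item If $Q_1\neq\emptyset$, then $Q$ is connected (it is the continuous image of the connected space $X$). Any compact component of $\mathring{Q}$ would be both open and closed in $Q$, so it would be all of $Q$, contradicting $Q_1\neq\emptyset$. Hence every component of $\mathring{Q}$ accumulates on $Q_1$ and is a noncompact surface.
\item If $Q_1=\emptyset$, then $X$ has no fixed points. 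This contradicts the standing convention of this case ($n=1$ with $X_1\ne\emptyset$, i.e.\ $Q_l\neq\emptyset$ as part of the filtration of length $n$). So we may assume $Q_1\ne\emptyset$. If the authors allow $Q_1=\emptyset$, the statement would fail for a nontrivial circle bundle over a closed surface, so I read the definition as requiring $Q_l\neq\emptyset$.
\end{itemize}
A noncompact connected surface has $H^2(-;\mathbb{Z})=0$, since it deformation retracts onto a $1$-dimensional complex (a standard fact, also available as a lemma in the appendix). Hence $H^2(\mathring{Q};\mathbb{Z})=0$, and therefore $c^{\mathrm{free}}=0$. Condition~\ref{cond surjective} then holds trivially, and we may choose $c=0$.

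\emph{Conclusion.} Proposition~\ref{key lem} now gives $X\cong X(Q,\{1,T\},0)$. Since $P_0=Q\times T$ is the trivial bundle, Definition~\ref{def of canonical model} (compare Remark~\ref{face acyclic case}, applied here to the trivial bundle even though $H^2(Q)$ need not vanish) identifies $X(Q,\{1,T\},0)$ with $Q\times T/{\sim_{\{1,T\}}}$.

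\emph{Main obstacle.} The delicate step is justifying that every component of $\mathring{Q}$ is noncompact, that is, excluding the case $Q_1=\emptyset$. I would argue this from the filtration conventions, namely $Q_l\supsetneq Q_{l-1}=\emptyset$ in the definition of a filtered space.
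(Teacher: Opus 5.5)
Your proposal is correct and is essentially the paper's proof: every top stratum is noncompact (the paper invokes Lemma~\ref{properties of top stratum}-(2)(b)), which gives $H^2(\mathring{Q};\mathbb{Z}^m)=0$, hence $c^{\mathrm{free}}=0$ and Condition~\ref{cond surjective}, and Proposition~\ref{key lem} then yields the equivariant homeomorphisms. Your extra points are sound: $Q_1\neq\emptyset$ is forced by the convention $Q_l\supsetneq Q_{l-1}=\emptyset$, and $P_0=Q\times T$ is trivial even if $H^2(Q)\neq 0$. The only slip is that the vanishing of $H^2$ for noncompact surfaces is not in the appendix; the paper instead cites \cite[Proposition 3.29]{Hat02} for it.
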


\begin{proof}
By Lemma~\ref{properties of top stratum}-(2)(b), 
$\mathring{Q}$ is a non-compact $2$-dimensional manifold.
Hence, \cite[Proposition 3.29]{Hat02} implies $H^2(\mathring{Q};\mathbb{Z}^m)=0$, which in particular yields $c^{\mathrm{free}}=0$.
Therefore, the homomorphism
$\iota^{\ast}: H^2(Q ;\mathbb{Z}^m) \to  H^2(\mathring{Q} ;\mathbb{Z}^m)=0$
induced by the inclusion
$\iota: \mathring{Q} \to Q$
is surjective.
The assertion then follows from Proposition~\ref{key lem}.
\end{proof}

Using the canonical models, we establish the main classification theorem for spaces with semi-trivial filtrations.

\begin{thm}\label{classification of semi-trivial}
  Let $X$ and $X'$ be locally standard $T$-pseudomanifolds with semi-trivial
  filtrations satisfying Condition~\ref{cond surjective}, and let
  $(Q, \{1,T\}, c^{\mathrm{free}})$ and $(Q', \{1,T\}, (c^{\mathrm{free}})')$
  be their respective characteristic data. Then the following two statements
  are equivalent:
  \begin{enumerate}
    \item $(Q, \{1,T\}, c^{\mathrm{free}})$ and $(Q', \{1,T\}, (c^{\mathrm{free}})')$
      are (weakly) isomorphic;
    \item $X$ and $X'$ are (weakly) equivariantly homeomorphic.
  \end{enumerate}
\end{thm}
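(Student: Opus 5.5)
The direction (2)$\Rightarrow$(1) is already available: by Remark~\ref{rem free chern}, a (weakly) $T$-equivariant homeomorphism $X\cong X'$ induces a (weakly) isomorphism of the characteristic data $(Q,\{1,T\},c^{\mathrm{free}})$ and $(Q',\{1,T\},(c^{\mathrm{free}})')$ in the sense of condition (1)'. So the work is in (1)$\Rightarrow$(2). By Proposition~\ref{key lem}, $X\cong X(Q,\{1,T\},c^{\mathrm{free}})$ and $X'\cong X(Q',\{1,T\},(c^{\mathrm{free}})')$. It therefore suffices to build a (weakly) equivariant homeomorphism between these canonical models from a (weak) isomorphism $f\colon Q\to Q'$ with automorphism $\psi$ of $T^m$.

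I would not try to compare lifts $c\in H^2(Q)$ and $c'\in H^2(Q')$, since $f^\ast(c')$ and $c$ need only agree after restriction to $\mathring Q$. Instead I would imitate Construction~\ref{cons of h} directly. The map $f$ is stratified, so it restricts to homeomorphisms $\mathring f\colon\mathring Q\to\mathring Q'$ and $f|_{Q_l}\colon Q_l\to Q'_l$. Condition (1)' gives $c^{\mathrm{free}}=\mathring f^{\ast}((c^{\mathrm{free}})')$. Hence the principal bundle $\mathring X\to\mathring Q$ is isomorphic to the pullback $\mathring f^{\ast}\mathring X'$, after twisting the $T$-action by $\psi$ in the weak case. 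Note that the characteristic functor $\{1,T\}$ is preserved by any $\psi$, so condition (2) of Definition~\ref{def of isomorphism of char pairs} is automatic.

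Composing these gives a ($\psi$-)equivariant homeomorphism $\mathring H\colon\mathring X\to\mathring X'$ covering $\mathring f$. I extend it to $H\colon X\to X'$ by $H|_{X_l}:=(\pi'|_{X'_l})^{-1}\circ f\circ\pi|_{X_l}$, which makes sense because both fixed-point sets map homeomorphically onto $Q_l$ and $Q'_l$. Then $H$ is a bijection with $\pi'\circ H=f\circ\pi$. Consequently $H(\pi^{-1}(U))=\pi'^{-1}(f(U))$ for every $U\subset Q$, and $H$ is ($\psi$-)equivariant on each piece.

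The main obstacle is continuity of $H$ at the fixed points. For this I would repeat the argument of Lemma~\ref{h is cont}. By Lemma~\ref{lem of inv nbh}, whose proof uses only that $x$ is fixed and $T$ is compact, a small open neighborhood $V$ of a fixed point in $X'$ decomposes as $\pi'^{-1}(U_p)\cup W$, with $U_p\subset Q'$ open and $W\subset\mathring X'$ open. The preimage $H^{-1}(\pi'^{-1}(U_p))=\pi^{-1}(f^{-1}(U_p))$ is open, and $H^{-1}(W)=\mathring H^{-1}(W)$ is open. At free points, continuity follows from $\mathring H$ being a homeomorphism onto the open set $\mathring X'$. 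Finally, $X$ is compact and $X'$ is Hausdorff, so the continuous bijection $H$ is a homeomorphism, which proves (2).

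This argument never uses the canonical models, and so it seemingly avoids Condition~\ref{cond surjective}. Alternatively, one can pass through the models via Proposition~\ref{key lem}, as the statement's hypotheses suggest; either route should work.
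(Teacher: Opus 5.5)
Your proof is correct, but for (1)$\Rightarrow$(2) it takes a different route from the paper.

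The paper stays inside the canonical-model framework. It picks $c'\in(\iota'^{\ast})^{-1}((c^{\mathrm{free}})')$ and sets $c:=f^{\ast}(c')$. By the commuting square for $\iota,\iota'$ and condition (1)$'$, this $c$ is automatically a lift of $c^{\mathrm{free}}$. So your worry about comparing lifts disappears once the lifts are chosen compatibly. The paper then applies Proposition~\ref{key lem} twice to get $X\cong X(Q,\{1,T\},c)$ and $X'\cong X(Q',\{1,T\},c')$. Finally it invokes \cite[Theorem 9.1]{KK26} for the (weakly) isomorphic data $(Q,\{1,T\},c)$ and $(Q',\{1,T\},c')$.

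You instead run the gluing argument of Construction~\ref{cons of h} and Lemma~\ref{h is cont} directly, with $X'$ in place of the canonical model. Your ingredients are the bundle isomorphism $\mathring X\cong \mathring f^{\ast}\mathring X'$, the forced map on fixed point sets, and continuity at fixed points via $T$-invariant neighborhoods. That argument only uses three properties of the target: it is compact Hausdorff, its free part is the given principal bundle, and its fixed set maps homeomorphically onto $Q'_l$. So it applies verbatim to $X'$.

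What each route buys:
\begin{itemize}
\item Your route is self-contained and does not need \cite[Theorem 9.1]{KK26}.
\item It also shows the equivalence holds without Condition~\ref{cond surjective}; that condition is only needed for the canonical-model description $X\cong X(Q,\{1,T\},c^{\mathrm{free}})$.
\item The paper's route reuses existing machinery and yields that explicit model as a by-product.
\end{itemize}

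One small point, which you share with the paper's proof of Lemma~\ref{h is cont}. The decomposition $V=\pi'^{-1}(U_p)\cup W$ with $W\subset\mathring X'$ need not hold when $V$ contains fixed points other than the chosen one, e.g.\ when $l\ge 1$. The invariant neighborhood produced by Lemma~\ref{lem of inv nbh} need not contain those other fixed points. The fix is to argue pointwise. Let $z\in X_l$ and let $V$ be any open neighborhood of $H(z)$. Take a $T$-invariant open $U$ with $H(z)\in U\subset V$. Then $H^{-1}(U)=\pi^{-1}\bigl(f^{-1}(\pi'(U))\bigr)$ is an open neighborhood of $z$ contained in $H^{-1}(V)$, which is all that continuity at $z$ requires.
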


\begin{proof}
Assume that
$
(Q, \{1,T\}, c^{\mathrm{free}})
$
and
$
(Q', \{1,T\}, (c^{\mathrm{free}})')
$
are (weakly) isomorphic.
By definition, there exists a stratified homeomorphism
$
f \colon Q \to Q'.
$
Since a stratified homeomorphism between topological stratified pseudomanifolds preserves the filtration, the restriction
\[
f|_{\mathring{Q}} \colon \mathring{Q} \to \mathring{Q'}
\]
is also a homeomorphism.
Then, the following diagram commutes:
\[
\begin{tikzcd}[ampersand replacement=\&]
Q  \ar{r}{f}[']{\cong}
 \&
   Q'
   \\
\mathring{Q}  \ar{r}{f|_{\mathring{Q}}}[']{\cong}  \ar{u}{\iota}
 \&
  \mathring{Q'}  \ar{u}{\iota'}
\end{tikzcd}
\]
where $\iota$ and $\iota'$ denote the inclusion maps.
This commutative diagram induces the following commutative diagram in cohomology:
\[
\begin{tikzcd}[ampersand replacement=\&]
H^2(Q; \mathbb{Z}^m) \ar{d}{\iota^{\ast}}
 \&
   H^2(Q'; \mathbb{Z}^m)
   \ar{l}{f^{\ast}}[']{\cong}
   \ar{d}{(\iota')^{\ast}}
   \\
H^2(\mathring{Q}; \mathbb{Z}^m)
 \&
  H^2(\mathring{Q'}; \mathbb{Z}^m)
  \ar{l}{(f|_{\mathring{Q}})^{\ast}}[']{\cong}
\end{tikzcd}
\]
Since
$
(Q, \{1,T\}, c^{\mathrm{free}})
$
and
$
(Q', \{1,T\}, (c^{\mathrm{free}})')
$
are (weakly) isomorphic, Remark~\ref{rem free chern}-(1)' implies that
\begin{align}\label{relation of chern class}
(f|_{\mathring{Q}})^{\ast}((c^{\mathrm{free}})')
=
c^{\mathrm{free}}.
\end{align}
Take a cohomology class
\[
c'
\in
(\iota'^{\ast})^{-1}((c^{\mathrm{free}})')
\subset
H^2(Q'; \mathbb{Z}^m),
\]
and define
\begin{align}\label{take c}
c := f^{\ast}(c')
\in
H^2(Q; \mathbb{Z}^m).
\end{align}
Then, by \eqref{relation of chern class} and the commutativity of the above diagram, we obtain
\[
\iota^{\ast}(c)=c^{\mathrm{free}}.
\]
Hence, by Construction~\ref{cons of canonical model},
we have
\[
X(Q, \{1,T\}, c^{\mathrm{free}})
=
X(Q, \{1,T\}, c),
\quad
X(Q', \{1,T\}, (c^{\mathrm{free}})')
=
X(Q', \{1,T\}, c').
\]
Furthermore, Proposition~\ref{key lem} yields equivariant homeomorphisms
\begin{align}\label{by key lem}
X \cong X(Q, \{1,T\}, c),
\quad
X' \cong X(Q', \{1,T\}, c').
\end{align}
Moreover, by \eqref{take c},
$
(Q, \{1,T\}, c)
$
and
$
(Q', \{1,T\}, c')
$
are (weakly) isomorphic (see Definition~\ref{def of isomorphism of char pairs}-(1)).
Therefore, by \cite[Theorem 9.1]{KK26}, there exists a (weakly) equivariant homeomorphism
\begin{align}\label{kk26 thm 9.1}
X(Q, \{1,T\}, c)
\cong
X(Q', \{1,T\}, c').
\end{align}
Combining \eqref{by key lem} and \eqref{kk26 thm 9.1}, we obtain a
(weakly) equivariant homeomorphism
\[
X \cong X'.
\]

The converse follows from Remark~\ref{rem free chern}.
\end{proof}

As an immediate consequence of Theorem~\ref{classification of semi-trivial}, we obtain the following classification for the case of $2$-stratifolds.

\begin{cor}\label{class of over 2-stratifold}
  Let $X$ and $X'$ be $3$-dimensional locally standard $T$-pseudomanifolds
  over $2$-stratifolds (i.e., $(l,m,n) = (1,1,1)$), and let $(Q, \{1,T\}, 0)$
  and $(Q', \{1,T\}, 0)$ be their respective characteristic data (see Corollary~\ref{canonical model of 111}). 
  Then the following two statements are
  equivalent:
  \begin{enumerate}
    \item $(Q, \{1,T\}, 0)$ and $(Q', \{1,T\}, 0)$ are (weakly) isomorphic;
    \item $X$ and $X'$ are (weakly) equivariantly homeomorphic.
  \end{enumerate}
\end{cor}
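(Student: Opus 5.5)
The corollary follows by specializing Theorem~\ref{classification of semi-trivial} to $(l,m,n)=(1,1,1)$. The work is checking that its hypotheses hold and that the characteristic data take the stated form.

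First, the filtration is semi-trivial. Since $m=n=1$, the filtration by orbit dimension is $X_3\supset X_1\supset\emptyset$, which is exactly the semi-trivial filtration of Definition~\ref{def of semi-trivial filtration} with $l=1$, $n=1$. As noted in Subsection~\ref{sec 3.1}, the characteristic functor for $m=1$ is forced to be $\lambda=\{1,T\}$, and the same holds for $X'$.

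Second, I verify Condition~\ref{cond surjective} and identify the Chern class, following the argument of Corollary~\ref{canonical model of 111}. By Lemma~\ref{properties of top stratum}-(2)(b), $\mathring{Q}$ is a non-compact $2$-manifold. Hence $H^2(\mathring{Q};\mathbb{Z})=0$ by \cite[Proposition 3.29]{Hat02}. This gives $c^{\mathrm{free}}=0$, and $\iota^{\ast}\colon H^2(Q;\mathbb{Z})\to H^2(\mathring{Q};\mathbb{Z})=0$ is trivially surjective. The same holds for $Q'$, so the characteristic data are $(Q,\{1,T\},0)$ and $(Q',\{1,T\},0)$.

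Third, I apply Theorem~\ref{classification of semi-trivial}, which gives the equivalence of (1) and (2) for these data. For weak isomorphisms, condition (1)' of Remark~\ref{rem free chern} reads $0=(f|_{\mathring{Q}})^{\ast}(0)$, which holds automatically. Condition (2) is automatic as well, since $\lambda=\lambda'=\{1,T\}$ and every stratified homeomorphism preserves strata by dimension. So (1) reduces to the existence of a stratified homeomorphism $Q\to Q'$, though the statement does not need this spelled out.

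No real obstacle remains. The only nontrivial input is the non-compactness of $\mathring{Q}$, which supplies both hypotheses of Theorem~\ref{classification of semi-trivial} and is already provided by Lemma~\ref{properties of top stratum}.
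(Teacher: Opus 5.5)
Your proposal is correct and follows the paper's own route: the paper obtains this corollary as an immediate consequence of Theorem~\ref{classification of semi-trivial}. The needed hypotheses are exactly the ones you check. These are the semi-trivial filtration, $c^{\mathrm{free}}=0$, and Condition~\ref{cond surjective}, the last two coming from the non-compactness of $\mathring{Q}$ as in Corollary~\ref{canonical model of 111}.
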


\subsection{Classification table}\label{sec 3.4}

We conclude this section by summarizing the above results in the following table.
Here $\{\ast\}$ denotes a one-point set, $\{1\}$ denotes the characteristic
functor assigning the trivial subgroup $\{1\}$ to itself.

\begin{table}[h]
\centering
\[
\begin{array}{c|cccc|c}
\dim X & \dim Q & l & m & n & X\\
\hline
1 & 0 & 0 & 1 & 0 & T^1 \\
\hline
2 & 0 & 0 & 2 & 0 & T^2  \\
2 & 1 & 1 & 1 & 0 & \text{principal $T^1$-bundle} \cong S^1 \times T^1\\
2 & 1 & 0 & 1 & 1 & \text{ over graph}\\
\hline
3 & 0 & 0 & 3 & 0 &  T^3  \\
3 & 1 & 1 & 2 & 0 & \text{principal $T^2$-bundle} \cong S^1 \times T^2\\
3 & 1 & 0 & 2 & 1 & \text{over graph}\\
3 & 2 & 2 & 1 & 0 & \text{principal $T^1$-bundle}\\
3 & 2 & 1 & 1 & 1 & \text{over $2$-stratifold}\\
\end{array}
\qquad
\begin{array}{c}
 \text{characteristic data } (Q, \lambda, c)\\
\hline
 (\{ \ast \}, \{1\}, 0) \\
\hline
(\{ \ast \}, \{1\}, 0) \\
 (S^1, \{1\}, 0)\\
 (\text{graph}, \{1,T\}, 0)\\
\hline
 (\{ \ast \}, \{1\}, 0)  \\
 (S^1, \{1\}, c)\\
 (\text{graph}, \lambda, 0)\\
 (\text{closed surface}, \{1\}, c)\\
 (\text{$2$-stratifold}, \{1,T\}, 0) \\
\end{array}
\]
\caption{Classification of locally standard $T$-pseudomanifolds of dimension at most $3$ and their characteristic data.}
\label{tab:classification}
\end{table}

\section{Manifold structures on locally standard $T$-pseudomanifolds}\label{sec 4}

Let $X$ be a connected locally standard $T$-pseudomanifold with
$\dim X \leq 3$.
In this section, we determine when $X$ admits a structure of a topological manifold in each case appearing in the classification table (see Table~\ref{tab:classification}).
When $n=0$, the orbit space $Q=Q_l$ is a topological manifold.
Moreover, since $X$ is a principal $T^m$-bundle over $Q$, it follows that $X$ is also a topological manifold.
In this case, $Q_0$ is a point, $Q_1$ is homeomorphic to $S^1$, and $Q_2$ is a closed surface.

In the following, we consider the case $n=1$.
Namely, $X$ is a locally standard $T$-pseudomanifold over either a graph or a $2$-stratifold.

\subsection{Links}\label{sec 4.1}
In this subsection, we study links of points in low-dimensional locally standard
$T$-pseudomanifolds and relate them to manifold structures and normality.
We first determine the possible links in low-dimensional
locally standard $T$-pseudomanifolds.
\begin{lem}\label{link is simple}
Let $X$ be a locally standard $T$-pseudomanifold with $\dim X \leq 3$ and $n = 1$.
  Then for each point $x \in X$, the link $L_x$ is given as
  follows:
  \[
  L_x \cong
  \begin{cases}
    \emptyset,
    & (x \in \mathring{X} = X_{l+m+1} \setminus X_{l+n-1}), \\[4pt]
    T^1 \sqcup \cdots \sqcup T^1,
    & (x \in X_{l+n-1}).
  \end{cases}
  \]
\end{lem}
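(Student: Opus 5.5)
The plan is to read off the link directly from Definition~\ref{def of T-pseudomanifold} in the case $n=1$, using the dimension bound $\dim X\le 3$ only to restrict the possible triples $(l,m,n)$ via Table~\ref{table1}. With $n=1$ the orbit-dimension filtration has only two nonempty levels, namely $\mathring{X}$ (where $i=1$) and $X_{l+(m-1)}$ (where $i=0$), so I will split into these two cases according to the value of $i$ in the definition.

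\emph{Case $x\in\mathring{X}$.} Here $i=n=1$, so the definition requires $L_x$ to be a $(2n-2i-1)=(-1)$-dimensional compact locally standard $T_x$-pseudomanifold. The only such object is the empty set. This is consistent with $T_x=\{1\}$ and with the chart $\varphi_x\colon U_x\to O^l\times(\Omega\times U(1)^{m-1})\times\mathring{c}(\emptyset)$, where $\mathring{c}(\emptyset)$ is a point. Hence $L_x=\emptyset$.

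\emph{Case $x$ in the singular stratum.} Here $i=0$, so $T_x\cong T^{1}$ and $L_x$ is a $1$-dimensional compact locally standard $T^1$-pseudomanifold. I will decompose $L_x$ into connected components. Each component is itself a connected $1$-dimensional locally standard $T^1$-pseudomanifold, since the defining conditions are local and the filtration and the action restrict to components. Proposition~\ref{1-dim} then identifies each component with $T^1$ carrying the multiplicative action. Compactness of $L_x$ gives finitely many components, so $L_x\cong T^1\sqcup\cdots\sqcup T^1$.

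The main point needing care is the claim that a component of a locally standard pseudomanifold is again one, with the correct triple of indices. For a $1$-dimensional $T^1$-space we need $m=1$ and $l+n=0$, so we must rule out $l=1,n=0$, which would mean a $1$-dimensional base with a $T^1$-action. This follows by dimension counting: $l+m+n=1$ with $m\ge1$ forces $l=n=0$, exactly as in the first row of Table~\ref{table1}. So every component is a single free orbit, and the statement follows. The same argument was already used implicitly in the proof of Lemma~\ref{preimage of edge}.
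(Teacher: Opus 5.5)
Your proposal is correct and follows the paper's proof: you split by $i\in\{0,1\}$ in Definition~\ref{def of T-pseudomanifold}, obtain $L_x=\emptyset$ when $i=1$, and when $i=0$ apply Proposition~\ref{1-dim} componentwise to the $1$-dimensional compact locally standard $T^1$-pseudomanifold $L_x$. Your extra justification about components is harmless but unnecessary in two places: once $(l',m',n')=(0,1,0)$ is forced, the first clause of Definition~\ref{def of T-pseudomanifold} already says $L_x$ is a finite disjoint union of circles, and the case $l'=1,n'=0$ you set out to rule out would have dimension $2$, so it never arises.
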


\begin{proof}
Recall Definition~\ref{def of T-pseudomanifold}~(b).
For $0 \le i \le n=1$, if
$
x \in X_{l+2i+(m-n)} \setminus X_{l+2(i-1)+(m-n)},
$
then the link $L_x$ is a $(2n-2i-1)$-dimensional locally standard
$T^{n-i}$-pseudomanifold.
When $2n-2i-1=-1$, we interpret $L_x=\emptyset$.
If $i=1$, then $L_x=\emptyset$.
If $i=0$, then $L_x$ is a $1$-dimensional locally standard
$T^1$-pseudomanifold.
By Proposition~\ref{1-dim}, each connected component of $L_x$
is homeomorphic to $T^1$.
Therefore,
$
L_x \cong T^1 \sqcup \cdots \sqcup T^1.
$
\end{proof}

\begin{rem}
By the above lemma, the link of each point is either empty or a disjoint union of circles.
In particular, no nontrivial finite quotient singularity appears.
Hence locally standard $T$-pseudomanifolds with $\dim X \leq 3$
have no orbifold singularities.
\end{rem}

The following lemma characterizes when a low-dimensional locally standard
$T$-pseudomanifold is a topological manifold in terms of connectedness of links.

\begin{lem}\label{mfd iff normal}
  A locally standard $T$-pseudomanifold $X$ with $\dim X \leq 3$ is a topological manifold
  if and only if $X$ is normal, i.e., the link of each point is connected.
\end{lem}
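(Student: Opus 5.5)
The plan is to reduce everything to a local computation at points of the singular stratum, using Lemma~\ref{link is simple}. If $n=0$ there is nothing to prove: $X$ is a principal bundle over a manifold, hence a manifold, and every link is empty, so it is trivially connected. So assume $n=1$.

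For $x\in\mathring{X}$ the link is empty. Such a point has a Euclidean neighbourhood of the form $O^l\times\Omega\times U(1)^{m-1}$, so these points are harmless in both directions.

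For $x\in X_{l+m-1}$, Definition~\ref{def of T-pseudomanifold} gives a neighbourhood
\[
U_x\cong O^l\times U(1)^{m-1}\times \cone(L_x),
\qquad
L_x\cong T^1\sqcup\cdots\sqcup T^1 \ (k \text{ copies}),
\]
with $k\ge 1$ because the top stratum is dense. The key observation is that $\cone(S^1)\cong\mathbb{R}^2$. Hence if $k=1$, then $U_x\cong O^l\times U(1)^{m-1}\times\mathbb{R}^2$, which is locally Euclidean of dimension $l+m+1$. Doing this at every point shows that a normal $X$ is a topological manifold; Hausdorffness and second countability are part of the standing assumptions.

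For the converse, suppose $k\ge2$ at some $x$. I will show that $x$ has no neighbourhood homeomorphic to $\mathbb{R}^d$, where $d=l+m+1$, using local homology. Near $x$,
\[
H_*(U_x,U_x\setminus\{x\})\cong H_*\bigl(O\times\cone(L),\,O\times\cone(L)\setminus\{(o,v)\}\bigr)\otimes H_*\bigl(U(1)^{m-1}\bigr).
\]
Rather than fight with the torus factor, I would localize first: a point of $U(1)^{m-1}$ has a small disk neighbourhood, so $x$ has a neighbourhood $\mathbb{R}^{l+m-1}\times\cone(L)$ with $x$ at $(0,v)$. By excision and the Künneth/suspension isomorphism,
\[
H_j\bigl(\mathbb{R}^{a}\times\cone(L),\,\mathbb{R}^{a}\times\cone(L)\setminus\{(0,v)\}\bigr)\cong \tilde H_{j-a-1}(L),
\qquad a=l+m-1 .
\]
With $L$ a disjoint union of $k$ circles, $\tilde H_0(L)\cong\mathbb{Z}^{k-1}\neq0$. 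So the local homology at $x$ is nonzero in degree $a+1=d-1$. For a $d$-manifold, local homology is concentrated in degree $d$, so this is a contradiction. (Local homology in degree $d$ is $\mathbb{Z}^k$, giving a second contradiction if one prefers.)

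The case $\dim X\le 3$ with $d$ small needs no separate treatment, since the formula is uniform. For example, for a graph with $m=1$ we get $d=1$ and $a=0$, and the computation reduces to counting the $k\ge2$ branches of a cone on $k$ points, handled via $L/T$.

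The main obstacle is justifying the local-homology identity cleanly: the cone-and-product formula $H_*(\cone L,\cone L\setminus v)\cong\tilde H_{*-1}(L)$ together with the product with $\mathbb{R}^a$. This is standard (Appendix-type lemmas), and I would cite it from \cite{Fri20} or prove it via the long exact sequence, using the contractibility of $\cone(L)$ and the fact that $\cone(L)\setminus\{v\}\simeq L$.
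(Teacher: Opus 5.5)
Your proof is correct and has the same skeleton as the paper's. Both dispose of $n=0$, use Lemma~\ref{link is simple} to see that each singular link is a disjoint union of $k\ge 1$ circles, and use $\cone(S^1)\cong\mathbb{R}^2$ to show that a normal $X$ is a manifold.

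The two proofs differ in the converse. The paper only observes that $\cone(L_x)$ has no neighbourhood of its vertex homeomorphic to $\mathbb{R}^2$ when $k\ge 2$, and stops there. That alone does not show that $x$ is a non-manifold point, because $x$ has the neighbourhood $\mathbb{R}^{l}\times U(1)^{m-1}\times\cone(L_x)$. A Euclidean factor can in general turn a non-manifold into a manifold: by the double suspension theorem, $\cone(H)\times\mathbb{R}^2$ is Euclidean for a non-simply-connected homology sphere $H$. Your identity $H_j\cong\tilde H_{j-a-1}(L_x)$ handles the product factor uniformly. It gives $\mathbb{Z}^{k-1}\neq 0$ in degree $d-1$ and $\mathbb{Z}^k$ in degree $d$, so the local homology is nonzero in two degrees and $x$ cannot be a manifold point of any dimension. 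This fills the step the paper leaves implicit, at the cost of a routine excision and relative K\"unneth argument; the K\"unneth step is valid since the two pieces of the pair are open.

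Two small slips should be fixed. First, your opening display tensors with the absolute homology $H_*(U(1)^{m-1})$, which is wrong for local homology at a point. The correct factor is $H_*(U(1)^{m-1},U(1)^{m-1}\setminus\{t\})$, which is why localizing to a disc, as you then do, is the right move. Second, in the graph case with $m=1$ one has $(l,m,n)=(0,1,1)$, so $d=l+m+1=2$ and $a=0$, not $d=1$. Near $x$, $X$ is a wedge of $k$ discs, and no passage to $L_x/T$ is needed; the uniform formula already covers this case.
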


\begin{proof}
Since $X$ is a locally standard $T$-pseudomanifold with $\dim X \leq 3$,
we have $n=0$ or $1$.
If $n=0$, then $X$ is a topological manifold and every link is empty.
Hence the assertion holds.
Assume $n=1$.
Let $x \in X_{l+n-1}$.
By Definition~\ref{def of T-pseudomanifold},
there exists a $T$-invariant open neighborhood of $x$ of the form
\[
U_x
\cong
\mathbb{R}^l \times U(1)^{m-1} \times \cone(L_x).
\]
By Lemma~\ref{link is simple},
$
L_x \cong T^1 \sqcup \cdots \sqcup T^1.
$
If $L_x$ is connected, then
\[
\cone(L_x) \cong \cone(T^1) \cong \mathbb{R}^2.
\]
Since $U(1)^{m-1}$ is a manifold,
it admits a neighborhood homeomorphic to $\mathbb{R}^{m-1}$.
Hence $x$ admits an open neighborhood homeomorphic to
$\mathbb{R}^{l+m+1}$.

Conversely, assume that $L_x$ is disconnected.
Then $L_x$ is a disjoint union of several copies of $T^1$.
Hence $\cone(L_x)$ has a cone singularity,
and the cone vertex does not admit a neighborhood homeomorphic to
$\mathbb{R}^2$.
Taking the contrapositive, if $X$ is a topological manifold,
then the link of every point is connected.
\end{proof}

\begin{rem}\label{only one top stratum}
  By \cite[Lemma 2.6.3]{Fri20}, a connected normal topological stratified
  pseudomanifold has only one top stratum. Hence $Q$ also has only one
  top stratum.
\end{rem}

We next compare the normality of a locally standard
$T$-pseudomanifold and that of its orbit space.

\begin{prop}\label{normality of orbit space}
Let $\pi \colon X \to Q$ be the orbit projection of a locally standard
$T$-pseudomanifold.
Then $X$ is normal if and only if $Q$ is normal.
\end{prop}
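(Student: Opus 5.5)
The plan is to compare links pointwise: the link of a point $\pi(x)\in Q$ is the quotient of the link of $x$ by the isotropy torus, and taking this quotient preserves the number of connected components. Normality is the condition that every link is connected, so it suffices to prove, for every $x\in X$, that $L_x$ is connected if and only if $L_x/T_x$ is connected.

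First, I fix the local picture. For $x\in X$ with $p=\pi(x)$, take a chart $(U_x,L_x,\varphi_x)$ as in Definition~\ref{def of T-pseudomanifold}. By \cite[Section 3]{Pop00} and \cite[Proposition 4.1]{KK26}, the quotient chart $(U_x/T, L_x/T_x, \varphi_x/T)$ is a chart around $p$ in $Q$, as already used in the proof of Lemma~\ref{preimage of edge}. This gives
\[
U_x/T\cong O^l\times(\Omega/U(1)^{i})\times\cone(L_x/T_x),
\]
so $L_x/T_x$ is a link of $p$. If $Q$ is given an arbitrary link at $p$, I compare it with this one using the uniqueness of links up to homeomorphism for topological stratified pseudomanifolds, or at least the invariance of their number of components. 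If that is not quoted in the paper, I would instead use Friedman's characterization of normality via local connectedness of $\mathring{Q}$: $Q$ is normal exactly when $\mathring{Q}\cap V$ is connected for arbitrarily small neighbourhoods $V$. Framed this way, normality does not depend on the choice of chart.

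Second, I show that $\pi_0(L_x)\to\pi_0(L_x/T_x)$ is a bijection. Since $T_x$ is a torus, it is connected. Each orbit $T_x\cdot y$ is therefore connected and lies in a single component of $L_x$. Two further facts are needed.
\begin{enumerate}
\item The orbit map $L_x\to L_x/T_x$ is surjective, continuous and open, so images of connected sets are connected and the induced map on components is well defined and surjective.
\item Components of $L_x$ are $T_x$-invariant, because $T_x$ is connected. Since $L_x$ is a compact pseudomanifold, its components are open and closed, and so are their images in $L_x/T_x$. Distinct components of $L_x$ therefore map to disjoint clopen subsets, which gives injectivity.
\end{enumerate}
Hence $L_x$ is connected if and only if $L_x/T_x$ is connected. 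If $L_x=\emptyset$, then $L_x/T_x=\emptyset$, and both links are trivially acceptable for normality.

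Finally, every point of $Q$ has the form $\pi(x)$, and the strata of $Q$ correspond to strata of $X$ via $\pi$. Normality of $X$ at all points is therefore equivalent to normality of $Q$ at all points. The main obstacle I expect is the well-definedness issue: normality is stated in terms of "the" link, while different charts could a priori give different links. I would handle this through the local-connectedness characterization in \cite[Chapter 2]{Fri20}. Alternatively, and more concretely, in our low-dimensional case Lemma~\ref{link is simple} says the links are disjoint unions of circles, with quotients finite point sets. The number of components can then be read off as the number of components of $\mathring{Q}\cap V$ for small conical neighbourhoods $V$ of $p$, which is intrinsic.
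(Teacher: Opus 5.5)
Your proposal is correct and follows the paper's proof. Both identify the link of $\pi(x)$ with $L_x/T_x$ through the quotient chart, then note that the connected torus $T_x$ preserves connected components, so $L_x$ is connected exactly when $L_x/T_x$ is. Your clopen-image argument for the bijection $\pi_0(L_x)\to\pi_0(L_x/T_x)$ and your remark that connectedness of a link does not depend on the chosen chart spell out details the paper's short proof leaves implicit.
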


\begin{proof}
Let $x \in X$, and let $L_x$ be the link of $x$.
By \cite[Section 3]{Pop00} and \cite[Proposition 4.1]{KK26},
the link of $\pi(x) \in Q$ is given by
\[
L_{\pi(x)} \cong L_x / T_x,
\]
where $T_x$ is the isotropy subgroup of $x$.
Since the action of the compact torus $T_x$ preserves connected components,
the quotient $L_x/T_x$ is connected if and only if $L_x$ is connected.
Hence $L_{\pi(x)}$ is connected if and only if $L_x$ is connected.
\end{proof}

We now combine Lemma~\ref{mfd iff normal} and
Proposition~\ref{normality of orbit space} to characterize when
$X$ is a topological manifold in terms of the orbit space $Q$.

\begin{cor}\label{mfd iff orbit normal}
Let $\pi \colon X \to Q$ be the orbit projection of a locally standard
$T$-pseudomanifold with $\dim X \leq 3$.
Then $X$ is a topological manifold if and only if $Q$ is normal.
\end{cor}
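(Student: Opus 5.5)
The corollary is obtained by chaining the two preceding results. By Lemma~\ref{mfd iff normal}, a locally standard $T$-pseudomanifold $X$ with $\dim X\le 3$ is a topological manifold if and only if $X$ is normal, that is, if and only if the link of every point is connected. By Proposition~\ref{normality of orbit space}, $X$ is normal if and only if its orbit space $Q$ is normal. Composing the two equivalences gives that $X$ is a topological manifold if and only if $Q$ is normal.

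The only point needing care is the meaning of ``normal'' for $Q$. Normality of $Q$ should mean that the link of every point of $Q$ is connected. The proof of Proposition~\ref{normality of orbit space} uses the identification $L_{\pi(x)}\cong L_x/T_x$ of \cite[Proposition 4.1]{KK26}, which is valid at every point of $Q$ because $\pi$ is surjective. So the two notions of normality match pointwise, with no leftover points.

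There is also a degenerate case. For $n=0$ all links are empty, and the empty set should be counted as vacuously connected in this setting. Then $X$ is a principal bundle over a manifold, hence a manifold, and $Q$ is trivially normal, so the statement still holds. This is already covered by the first case in the proof of Lemma~\ref{mfd iff normal}.

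No real obstacle is expected. The only step worth checking explicitly is that the convention for normality via connected links is the same for $X$ and for $Q$ across all strata, including the top strata, where links are empty.
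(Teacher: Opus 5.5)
Your proposal is correct and is exactly the paper's argument: the corollary follows by composing Lemma~\ref{mfd iff normal} ($X$ is a manifold if and only if $X$ is normal) with Proposition~\ref{normality of orbit space} ($X$ is normal if and only if $Q$ is normal). Your remarks on empty links and on matching links pointwise via $L_{\pi(x)}\cong L_x/T_x$ agree with the conventions the paper uses in those two results.
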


\subsection{The graph case}\label{sec 4.2}
Let $X$ be a locally standard $T$-pseudomanifold over a graph $Q$.
By Remark~\ref{only one top stratum}, if $X$ admits a structure of a topological manifold, then $Q$ must be either an interval or a single loop.
However, if $Q$ is a loop, then the link of the vertex of the loop consists of two points.
Hence $Q$ is not normal.
Therefore, by Corollary~\ref{mfd iff orbit normal}, $X$ is not a topological manifold.
Thus we obtain the following proposition.

\begin{prop}
  Let $X$ be a locally standard $T$-pseudomanifold over a graph $Q$. Then $X$
  is a topological manifold if and only if $Q$ is an interval.
\end{prop}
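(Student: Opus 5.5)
By Corollary~\ref{mfd iff orbit normal}, $X$ is a topological manifold if and only if $Q$ is normal. The proof therefore reduces to showing that a finite connected graph $Q$, viewed as a $1$-dimensional topological stratified pseudomanifold with filtration $Q_1 \supset Q_0$, is normal exactly when it is an interval.

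\emph{If direction.} Suppose $Q$ is an interval $[0,1]$ with $Q_0=\{0,1\}$.
\begin{enumerate}
\item At each interior point the link is empty.
\item At each endpoint a neighborhood is $\cone(\ast)$, so the link is a single point, which is connected.
\end{enumerate}
Hence $Q$ is normal, and Corollary~\ref{mfd iff orbit normal} shows that $X$ is a manifold. One could also see this directly: over $[0,1]$ with fixed subtori $T_{v_0},T_{v_1}$ at the endpoints, every vertex link is a single circle by Lemma~\ref{link is simple}.

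\emph{Only if direction.} Suppose $X$ is a topological manifold.
\begin{enumerate}
\item By Lemma~\ref{mfd iff normal}, $X$ is normal. Since $X$ is connected, $Q$ is connected.
\item By Proposition~\ref{normality of orbit space}, $Q$ is normal.
\item By Remark~\ref{only one top stratum}, $Q$ has exactly one top stratum, i.e.\ exactly one open edge $\mathring e$.
\item Because the top stratum is dense in $Q$, every vertex lies in the closure of this edge. So $Q=e$ is either a closed interval with two distinct vertices or a loop with one vertex.
\end{enumerate}

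To exclude the loop, I will compute the link at its vertex $v$. A small neighborhood of $v$ consists of two half-open arcs, one from each end of $\mathring e$, glued at $v$. So it is $\cone(\ast\sqcup\ast)$, and the link $L_v$ is two points, which is disconnected. Thus $Q$ is not normal, a contradiction. This leaves $Q$ as an interval.

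The only step needing care is the local structure at the loop vertex. That is, I must justify that the link is exactly two points rather than just asserting it. I would argue it from the chart in Definition~\ref{def of pseudomanifold}: the number of points in the link equals the number of ends of edges incident to $v$, which can be read off from the structure of $Q$ as a graph (Lemma~\ref{structure of graph}). Equivalently, on the level of $X$, Lemma~\ref{link is simple} gives the link of a point over $v$ as the union of two circles $T^1\sqcup T^1$, one per edge-end. This matches the description $L_{\pi(x)}\cong L_x/T_x$ from the proof of Proposition~\ref{normality of orbit space}.
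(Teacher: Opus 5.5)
Your proposal is correct and follows the same route as the paper. You reduce to normality of $Q$ via Corollary~\ref{mfd iff orbit normal}, use Remark~\ref{only one top stratum} to see that $Q$ is a single closed edge (an interval or a loop), and exclude the loop because the link of its vertex is two points. Two points the paper leaves implicit are made explicit in your version: the ``if'' direction, and the reading of ``interval'' as one edge with two distinct vertices ($Q_0=\{0,1\}$, so no subdivided intervals with extra bivalent vertices).
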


We next describe examples of locally standard $T$-pseudomanifolds over an interval.

\begin{ex}
Let $X$ be a locally standard $T$-pseudomanifold over a graph $Q$.

\begin{itemize}
\item
Suppose that $\dim X =2$.
Then $\dim T=m=1$.
In this case, $X$ is a locally standard $T$-pseudomanifold with semi-trivial filtration, and the characteristic data $(Q,\{1,T\},0)$ is given as follows:
\[
\begin{tikzpicture}

\draw (1,2) -- (4,2);

\node[above left] at (1,2) {$T^1$};
\node[above right] at (4,2) {$T^1$};

\fill (1,2) circle (2pt);
\fill (4,2) circle (2pt);

\end{tikzpicture}
\]
This coincides with the characteristic function introduced in \cite{DJ91}.
Hence $X$ is the quasitoric manifold $\mathbb{C}P^1$.
For quasitoric manifolds, see \cite{DJ91} and \cite{BP12}.

\item
Suppose that $\dim X =3$.
Then $\dim T=m=2$.
In this case, the characteristic functor assigns $\{1\}$ to the edge of the interval, and assigns a $1$-dimensional subtorus of $T^2$ to each vertex.
Giving a $1$-dimensional subtorus of $T^2$ is equivalent to assigning a primitive vector in $\mathbb{Z}^2$ up to sign.
Hence the characteristic data $(Q,\lambda,0)$ is given as follows:
\[
\begin{tikzpicture}

\draw (1,2) -- (4,2);

\node[above left] at (1,2) {$(a,b)$};
\node[above right] at (4,2) {$(c,d)$};

\fill (1,2) circle (2pt);
\fill (4,2) circle (2pt);

\end{tikzpicture}
\]
where $(a,b)$ and $(c,d)$ are primitive vectors.
This coincides with the hypercharacteristic function discussed in \cite{SS16}.
Therefore, by \cite[Proposition 2.3]{SS16}, $X$ is homeomorphic to one of the following spaces:
\[
X \cong
\begin{cases}
S^3,
& ( \det A=\pm 1), \\[4pt]
\mathbb{C}P^1 \times S^1,
& ( (c,d)=\pm (a,b)), \\[4pt]
S^3/(\mathbb{Z}/k\mathbb{Z}),
& ( \det A=\pm k).
\end{cases}
\]
where $A$ is the matrix whose rows are $(a,b)$ and $(c,d)$.  
Moreover, $S^3$ is the moment-angle manifold over an interval (see \cite{BP12}).
\end{itemize}
\end{ex}

\subsection{The case $(l, m, n)=(1, 1, 1)$}\label{sec 4.3}

Let $X$ be a $3$-dimensional locally standard $T$-pseudomanifold over a $2$-stratifold, which corresponds to the case $(l, m, n)=(1, 1, 1)$. 
If $X$ is a topological manifold, we obtain the following property for the $2$-stratifold $Q$ by Proposition~\ref{normality of orbit space}.

\begin{prop}\label{prop of mfd with boundary}
  A normal $2$-stratifold is a surface with boundary. Hence, it is obtained
  from a closed surface by removing finitely many open discs.
\end{prop}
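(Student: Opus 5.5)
Let $Q$ be a normal $2$-stratifold, so its filtration is $Q=Q_2\supset Q_1\supset\emptyset$ and every link is connected. The plan is to show that every point has a neighbourhood homeomorphic to $\mathbb{R}^2$ or to the half-plane $\mathbb{R}\times[0,\infty)$, and then apply the classification of compact surfaces.

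\textbf{Points of $\mathring{Q}=Q_2\setminus Q_1$.} These have empty link. Definition~\ref{def of pseudomanifold} then gives a neighbourhood homeomorphic to an open subset of $\mathbb{R}^2$.

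\textbf{Points $p\in Q_1$.} Definition~\ref{def of pseudomanifold} gives a chart
\[
U_p\cong O_p\times \cone(L_p),
\]
with $O_p\subset\mathbb{R}$ a contractible open set (an open interval) and $L_p$ a $0$-dimensional compact stratified pseudomanifold, that is, a finite set of points. Normality says $L_p$ is connected; since $Q_1$ is a proper closed subset, $L_p\ne\emptyset$. Hence $L_p$ is a single point, so $\cone(L_p)\cong[0,1)$ and
\[
U_p\cong (0,1)\times[0,1).
\]
This is a half-disc, and $p$ corresponds to a point of $(0,1)\times\{0\}$.

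\textbf{Conclusion.} $Q$ is therefore a topological $2$-manifold with boundary $\partial Q=Q_1$, and $\mathring{Q}$ is its interior. For the identification of the boundary with $Q_1$ I will invoke invariance of domain: a boundary point of a half-disc has no neighbourhood homeomorphic to $\mathbb{R}^2$.

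Compactness of $Q$ comes from compactness of $X$, since $Q=X/T$ is a continuous image. By Lemma~\ref{classification of 1-manifold}, $\partial Q=Q_1$ is a finite disjoint union of circles. Gluing a closed $2$-disc to $Q$ along each boundary circle produces a closed surface $\Sigma$, and $Q$ is $\Sigma$ minus finitely many open discs. If $Q$ is not connected, this applies componentwise; in our setting $X$ is connected, so $Q$ is as well.

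I expect the only delicate step to be the claim that the link is nonempty and consists of exactly one point. The rest is standard surface topology.
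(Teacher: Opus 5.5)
Your proof is correct and follows the paper's argument: the chart $\mathbb{R}\times\cone(L_p)$, with $L_p$ a finite set that normality forces to be a single point, gives half-plane neighbourhoods. Only the last step differs, in that you cap off the boundary circles with discs directly where the paper cites the classification of compact surfaces with boundary. One small fix: justify $L_p\neq\emptyset$ by the density of $\mathring{Q}$, since an empty link would make $U_p$ an open subset of $Q$ contained in $Q_1$; that $Q_1$ is a proper closed subset does not by itself give this.
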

\begin{proof}
We verify the local structure around each point of the $2$-stratifold
$(Q, \mathfrak{Q} \colon Q_2 \supset Q_1 \supset \emptyset)$.
If $p \in Q_2 \setminus Q_1$, then a small open neighborhood
$U_p$ of $p$ is homeomorphic to $\mathbb{R}^2$.
Suppose that $p \in Q_1$.
By Definition~\ref{def of pseudomanifold}-3,
a small open neighborhood $U_p$ of $p$ is homeomorphic to
\[
\mathbb{R}^1 \times \cone(L_p),
\]
where $L_p$ is the link of $p$.
Since $L_p$ is a $0$-dimensional compact topological stratified
pseudomanifold, it consists of finitely many points.
Since $Q$ is normal, the link $L_p$ is connected.
Therefore, $L_p$ consists of a single point.
Hence
\[
U_p
\cong
\mathbb{R}^1 \times \cone(L_p)
=
\mathbb{R}^1 \times \cone(\ast)
\cong
\mathbb{R}^1 \times \mathbb{R}_{\ge 0}.
\]
Thus every point of $Q$ admits a neighborhood homeomorphic either to
$\mathbb{R}^2$ or to $\mathbb{R} \times \mathbb{R}_{\ge 0}$.
Therefore, $Q$ is a manifold with boundary.
The second assertion follows from the classification of connected compact
surfaces with boundary (see, for example,
\cite[(4.17) Theorem]{Kin93}).
\end{proof}

The following proposition, which also follows from \cite[Theorem 1]{Ray68},
is a consequence of Corollary~\ref{class of over 2-stratifold}.

\begin{prop}
  Let $X$ be a connected $3$-dimensional locally standard $T$-pseudomanifold
  over a $2$-stratifold. If $X$ is a topological manifold, then its equivariant
  homeomorphism type is classified by the homeomorphism type of a surface with
  boundary.
\end{prop}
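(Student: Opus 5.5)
The plan is to combine Corollary~\ref{class of over 2-stratifold} with Corollary~\ref{mfd iff orbit normal} and Proposition~\ref{prop of mfd with boundary}. Let $X$ and $X'$ be connected $3$-dimensional locally standard $T$-pseudomanifolds over $2$-stratifolds $Q$ and $Q'$, and suppose both are topological manifolds. By Corollary~\ref{mfd iff orbit normal}, $Q$ and $Q'$ are normal. By Proposition~\ref{prop of mfd with boundary}, they are compact connected surfaces with boundary. Connectedness of $Q$ follows from that of $X$.

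The first key step identifies the stratification of $Q$ with its manifold-with-boundary structure. The proof of Proposition~\ref{prop of mfd with boundary} shows:
\begin{itemize}
\item every point of $Q_1$ has a neighborhood $\mathbb{R}\times\mathbb{R}_{\ge0}$ in which it lies on $\mathbb{R}\times\{0\}$;
\item every point of $Q_2\setminus Q_1$ has a Euclidean neighborhood.
\end{itemize}
Invariance of domain then gives $Q_1=\partial Q$ and $\mathring{Q}=\operatorname{int}Q$. It follows that the filtration $Q\supset Q_1$ is intrinsically determined by the topology of $Q$.

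The second step shows that a homeomorphism $f\colon Q\to Q'$ is automatically a stratified homeomorphism. Indeed, $f$ maps $\partial Q$ onto $\partial Q'$ by invariance of the boundary, so it carries strata to strata. The characteristic data of $X$ and $X'$ are $(Q,\{1,T\},0)$ and $(Q',\{1,T\},0)$ by Corollary~\ref{canonical model of 111}. The characteristic functor $\{1,T\}$ is the unique one, so diagram \eqref{comm diagram: weak isomorphism} commutes with $\psi=\mathrm{id}$. Condition (1)' is trivial because both free Chern classes vanish. Hence $f$ is an isomorphism of characteristic data.

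Corollary~\ref{class of over 2-stratifold} then gives an equivariant homeomorphism $X\cong X'$. Conversely, an equivariant homeomorphism $X\cong X'$ induces a homeomorphism $Q\cong Q'$ of orbit spaces. So the equivariant homeomorphism type of $X$ corresponds exactly to the homeomorphism type of the surface with boundary $Q$. By Proposition~\ref{prop of mfd with boundary}, this type is determined by the genus, orientability and number of boundary circles.

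I would also note that every compact connected surface with boundary arises in this way. The canonical model $Q\times T/{\sim_{\{1,T\}}}$ is a locally standard $T$-pseudomanifold by \cite[Theorem 12.1]{KK26}, and it is a manifold by Corollary~\ref{mfd iff orbit normal}. This makes the correspondence a bijection.

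The main obstacle is the identification $Q_1=\partial Q$. One must rule out, via invariance of domain, that a point of $Q_1$ has a Euclidean neighborhood. This justifies that any homeomorphism between the surfaces is stratified, so the topological type of $Q$ alone suffices.
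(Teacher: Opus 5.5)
Your proposal is correct and follows the paper's proof. Both reduce to Corollary~\ref{class of over 2-stratifold}, identify $Q$ as a surface with boundary via Proposition~\ref{prop of mfd with boundary}, and note that a homeomorphism of such surfaces preserves boundary and interior and is therefore stratified; your invariance-of-domain identification $Q_1=\partial Q$ makes explicit what the paper leaves implicit. One small caveat on your optional realization remark: with $l=1$ the filtration forces $Q_1\neq\emptyset$, so only surfaces with nonempty boundary arise.
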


\begin{proof}
  By Corollary~\ref{class of over 2-stratifold}, the equivariant homeomorphism
  type is classified by the characteristic data $(Q, \{1,T\}, 0)$, and hence
  by the stratified homeomorphism type of the $2$-stratifold $Q$ alone (see Definition~\ref{def of isomorphism of char pairs} and Remark~\ref{rem free chern}). By
  Proposition~\ref{prop of mfd with boundary}, $Q$ is a surface with boundary.
  Since a homeomorphism of surfaces with boundary maps the interior to the interior and
  the boundary to the boundary, it is a stratified homeomorphism.
\end{proof}

\begin{rem}
  As part of the results of \cite[Theorem 1]{Ray68}, it is a classical
  theorem that $S^1$-actions on $3$-manifolds without exceptional orbits can be
  classified by surfaces with boundary. Thus, Corollary~\ref{class of over
  2-stratifold} can be seen as an extension of this classification result from
  manifolds to the framework of topological stratified pseudomanifolds.
\end{rem}

\subsection{Classification table}\label{sec 4.4}

We conclude this section by summarizing the conditions under which a connected
locally standard $T$-pseudomanifold with $\dim X \leq 3$ admits the structure
of a topological manifold.
The results are listed in Table~\ref{tab:when_manifold}.
\begin{table}[htbp]
\centering
\[
\begin{array}{c|cccc|c}
\dim X & \dim Q & l & m & n & \text{orbit space}\\
\hline
1 & 0 & 0 & 1 & 0 & \{\ast\} \\
\hline
2 & 0 & 0 & 2 & 0 & \{\ast\} \\
2 & 1 & 1 & 1 & 0 & S^1\\
2 & 1 & 0 & 1 & 1 & \text{interval}\\
\hline
3 & 0 & 0 & 3 & 0 & \{\ast\}\\
3 & 1 & 1 & 2 & 0 & S^1\\
3 & 1 & 0 & 2 & 1 & \text{interval}\\
3 & 2 & 2 & 1 & 0 & \text{closed surface}\\
3 & 2 & 1 & 1 & 1 & \text{compact surface with boundary}\\
\end{array}
\]
\caption{Orbit spaces for locally standard $T$-pseudomanifolds with $\dim X \leq 3$ that are topological manifolds.}
\label{tab:when_manifold}
\end{table}

\appendix

\section{Lemmas on topological stratified pseudomanifolds}\label{sec A}

In this appendix, we prepare several lemmas on topological stratified
pseudomanifolds.
Since all of them are basic properties, the reader may skip this appendix and
refer back to it only when necessary.
We begin with the following well-known property, called the Frontier
Condition.

\begin{prop}[{\cite[Lemma 2.3.7]{Fri20}}]\label{frontier condition}
  A topological stratified pseudomanifold satisfies that for any two strata
  $S$ and $S'$, if $S \cap \overline{S'} \neq \emptyset$, then
  $S \subset \overline{S'}$.
\end{prop}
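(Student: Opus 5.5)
The plan is to prove the Frontier Condition by induction on the length $n$ of the filtration (equivalently on the dimension), using the local cone charts of Definition~\ref{def of pseudomanifold}. Fix strata $S$ and $S'$ with $S\cap\overline{S'}\neq\emptyset$. I will show that the set $A:=S\cap\overline{S'}$ is both open and closed in $S$. Since $S$ is connected (it is a connected component of $Q_i\setminus Q_{i-1}$) and $A\neq\emptyset$, this gives $A=S$, i.e. $S\subset\overline{S'}$.

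Closedness of $A$ in $S$ is immediate: $\overline{S'}$ is closed in $Q$, so $S\cap\overline{S'}$ is relatively closed in $S$. The real work is openness. Take $p\in A$, say $p\in S\subset Q_{l+i}\setminus Q_{l+i-1}$, and choose a chart $\varphi_p:U_p\to O_p\times\cone(L_p)$ with $O_p\subset\mathbb{R}^{l+i}$ contractible, hence connected. The chart preserves the filtration, so $\varphi_p(U_p\cap S)$ contains $O_p\times\{v\}$, where $v$ is the cone vertex. Since $O_p\times\{v\}$ is connected and lies in $Q_{l+i}\setminus Q_{l+i-1}$, it corresponds to an open subset of $S$ containing $p$; replacing $O_p$ by a smaller connected open neighborhood if necessary, I may assume $\varphi_p^{-1}(O_p\times\{v\})\subset S$ and that this set is open in $S$. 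It then suffices to show that every point $(o,v)$ with $o\in O_p$ lies in $\overline{S'}$.

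To see this, note that $U_p\cap S'$ is a union of strata of $U_p$. Under $\varphi_p$ these correspond to sets $O_p\times \big((S''\times(0,1))\big)$, where $S''$ runs over strata of $L_p$; the filtration condition $\varphi_p(U_p\cap Q_{l+i+j+1})=O_p\times\cone((L_p)_j)$ identifies the strata away from the vertex with products of $O_p$, strata of $L_p$, and $(0,1)$. Because $p\in\overline{S'}$ and $p\notin S'$ (as $S\neq S'$, the case $S=S'$ being trivial), $U_p\cap S'$ is nonempty and contains such a product piece $O_p\times S''\times(0,1)$, or at least meets one. Connectedness of $O_p\times S''\times(0,1)$, which holds when $S''$ is a stratum of $L_p$ and hence connected, shows the whole piece lies in the single stratum $S'$. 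The closure of $O_p\times S''\times(0,1)$ in $O_p\times\cone(L_p)$ contains $O_p\times\{v\}$, since letting the cone coordinate tend to $0$ reaches the vertex over every $o\in O_p$. Therefore $\varphi_p^{-1}(O_p\times\{v\})\subset\overline{S'}$, which gives openness.

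The main obstacle is the identification step: one must check that a stratum $S'$ of $Q$ meeting $U_p$ decomposes, inside the chart, into pieces of the form $O_p\times S''\times(0,1)$ with $S''$ a stratum of $L_p$. This amounts to verifying that strata of $O_p\times\cone(L_p)$ (as a filtered space) are exactly $O_p\times\{v\}$ and the sets $O_p\times S''\times(0,1)$. I would establish this directly from the definition of the cone filtration, using that the connected components of $O_p\times(\cone((L_p)_j)\setminus\cone((L_p)_{j-1}))$ are products of $O_p$ with components of $(L_p)_j\setminus(L_p)_{j-1}$ and with $(0,1)$. No induction on $L_p$ is then actually needed; the argument is purely local.
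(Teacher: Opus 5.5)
Your proof is correct. Showing that $S\cap\overline{S'}$ is clopen in the connected stratum $S$ is the standard argument: it works through a distinguished neighbourhood $O_p\times\cone(L_p)$, whose strata are $O_p\times\{v\}$ and the connected pieces $O_p\times S''\times(0,1)$, each having $O_p\times\{v\}$ in its closure. The paper gives no proof of its own and simply cites \cite[Lemma 2.3.7]{Fri20}.

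Two small points. The induction announced in your first sentence is never used, as you note yourself. The one implicit step is that $U_p\cap S'$ misses $O_p\times\{v\}$. This follows from your inclusion $\varphi_p^{-1}(O_p\times\{v\})\subset S$, or directly from $\dim S'>\dim S$, which is forced because $S$ is open in $Q_{\dim S}$.
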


By this property, the set $\mathcal{S}(Q)$ of strata of a topological
stratified pseudomanifold $Q$ carries the structure of a poset. The following
lemma makes this explicit.

\begin{lem}[{\cite[Proposition 2.2.20]{Fri20}}]\label{order from frontier condition}
  Let $Q$ be a topological stratified pseudomanifold. The closure of any
  stratum can be written as a union of strata of lower dimension. More
  precisely, for any stratum $S \in \mathcal{S}(Q)$, the following equality
  holds:
  \[
  \overline{S} = \bigcup_{\substack{R \in \mathcal{S}(Q), \\ R \subset \overline{S}}} R.
  \]
\end{lem}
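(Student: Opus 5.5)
The statement is Lemma~\ref{order from frontier condition}: the closure of a stratum $S$ equals the union of all strata $R$ with $R\subset\overline{S}$. The plan is to prove two inclusions, relying on two facts. The first is that the strata of $Q$ partition $Q$. The second is the Frontier Condition (Proposition~\ref{frontier condition}).

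The inclusion $\supseteq$ is immediate. Every stratum $R$ appearing in the union satisfies $R\subset\overline{S}$ by hypothesis, so the union is contained in $\overline{S}$.

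For $\subseteq$, take any $p\in\overline{S}$.
\begin{itemize}
\item Since $Q=\bigcup_i (Q_i\setminus Q_{i-1})$ and each $Q_i\setminus Q_{i-1}$ decomposes into its connected components, $p$ lies in a unique stratum $R$. This is the dimension-$i$ component containing $p$, where $i$ is minimal with $p\in Q_i$.
\item Then $p\in R\cap\overline{S}$, so $R\cap\overline{S}\neq\emptyset$.
\item Proposition~\ref{frontier condition} now gives $R\subset\overline{S}$.
\item Hence $p$ lies in a stratum $R$ of the indexed family, and $\overline{S}$ is contained in the union.
\end{itemize}

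The statement also says these strata are ``of lower dimension''; I read this as dimension at most $\dim S$, with equality only for $R=S$. To check it, suppose $R\neq S$ and $R\subset\overline{S}$. Since distinct strata are disjoint, $R\subset\overline{S}\setminus S$. Let $k=\dim S$.
\begin{itemize}
\item Because $Q_k$ is closed, $\overline{S}\subset Q_k$, so $\dim R\le k$.
\item Suppose $\dim R=k$, so $R$ and $S$ are distinct components of $Q_k\setminus Q_{k-1}$.
\item Components of a manifold are open in it, so $S$ is closed in $Q_k\setminus Q_{k-1}$.
\item Therefore $\overline{S}\cap(Q_k\setminus Q_{k-1})=S$, which contradicts $\emptyset\neq R\subset \overline{S}\setminus S$.
\end{itemize}
Thus $\dim R<k$.

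The only substantive input is the Frontier Condition, which is taken from Proposition~\ref{frontier condition}. Beyond that, the one point needing care is the local-openness step: that components of the manifold $Q_k\setminus Q_{k-1}$ are open in it. This holds because manifolds are locally connected.
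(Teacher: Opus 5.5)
Your argument is correct and matches what the paper indicates: the paper gives no proof of its own, only citing \cite{Fri20} and presenting the lemma as the explicit form of the Frontier Condition (Proposition~\ref{frontier condition}), which is exactly the input for your $\subseteq$ step. One small precision in the dimension check: Definition~\ref{def of pseudomanifold} only says each stratum is a manifold, which alone does not make $Q_k\setminus Q_{k-1}$ locally connected; however, the chart condition gives $U_p\cap Q_k\cong O_p$ for $p\in Q_k\setminus Q_{k-1}$, so $Q_k\setminus Q_{k-1}$ is locally Euclidean and your step that its components are open goes through.
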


We also recall the following well-known fact about topological manifolds.

\begin{lem}[{\cite[Exercise 1.2.6]{Hir76}}]\label{classification of 1-manifold}
  A connected, paracompact Hausdorff $1$-manifold is diffeomorphic to $S^1$ if it is compact, and to $\mathbb{R}$ if it is not compact.
\end{lem}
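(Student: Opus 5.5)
The statement to prove is Lemma~\ref{classification of 1-manifold}: a connected, paracompact Hausdorff $1$-manifold $M$ is diffeomorphic (equivalently homeomorphic, once a smooth structure is fixed) to $S^1$ if compact and to $\mathbb{R}$ otherwise. I would follow the standard arc-gluing argument. The key observation is how two coordinate arcs can overlap. Let $U,V\subset M$ be open sets with homeomorphisms $U\cong(0,1)$ and $V\cong(0,1)$. Then $U\cap V$ is an open subset of each, hence a countable union of open intervals. Using the Hausdorff property, I would show the following: if $U\not\subset V$ and $V\not\subset U$ and $U\cap V\neq\emptyset$, then $U\cap V$ has at most two components. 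Each component is an interval that must run off an end of $U$ and an end of $V$. Otherwise a boundary point of the component in $U$ and the corresponding limit in $V$ would be two distinct points that cannot be separated.

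\textbf{Case 1: one component.} Then $U\cup V$ is homeomorphic to an open interval. I would glue the two coordinate maps by an increasing reparametrization on the overlap.

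\textbf{Case 2: two components.} Then both ends of $U$ are glued to both ends of $V$. So $U\cup V$ is open and compact, hence homeomorphic to $S^1$. Being open and closed in the connected $M$, it equals $M$.

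Next I would exhaust $M$. Paracompactness together with local Euclidean-ness and connectedness gives second countability. So $M$ is covered by countably many coordinate arcs $U_1,U_2,\dots$, which I reorder so that each $U_{k+1}$ meets $W_k:=U_1\cup\dots\cup U_k$; connectedness makes this possible. By induction, each $W_k$ is either an open interval or all of $M\cong S^1$, by the overlap analysis above. The case where $U_{k+1}$ is contained in $W_k$, or contains it, is trivial. If $S^1$ never occurs, $M$ is an increasing union of open intervals $W_k$, each an open subinterval of the next. I would then build a homeomorphism $M\to\mathbb{R}$ inductively, extending the parametrization of $W_k$ to $W_{k+1}$ so that the images exhaust $\mathbb{R}$. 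Such a union is noncompact, since the $W_k$ form an open cover with no finite subcover unless some $W_k=M$, and an open interval is not compact. Compactness therefore forces the $S^1$ case.

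For the smooth statement, I would replace homeomorphisms by diffeomorphisms throughout. On each overlap the transition maps are diffeomorphisms between intervals, so the gluing in Cases 1 and 2 can use smooth monotone bump-function interpolations.

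The main obstacle I expect is the two-arc overlap lemma. There one must carefully rule out a component of $U\cap V$ that ends in the interior of both arcs, or a configuration with three or more components. I would handle it by taking a component $(a,b)$ of $U\cap V$ in $U$-coordinates and following the monotone transition map $\phi$ to its ends. If $b<1$ and the $V$-coordinate limit of $\phi$ at $b$ is interior to $V$, the two points are distinct yet have intersecting neighbourhoods, contradicting Hausdorffness. This forces each end of each component to coincide with an end of $U$ or of $V$, which bounds the number of components by two.
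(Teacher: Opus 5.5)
Your proposal is correct. It differs from the paper only in that the paper gives no argument: it cites \cite[Exercise 1.2.6]{Hir76}, whereas you write out the classical arc-gluing proof (two overlapping coordinate arcs meet in at most two components, then exhaust $M$ by a chain of arcs). Your route has a real advantage here. It proves the \emph{topological} statement directly, and that is the version the paper actually uses: in Lemma~\ref{structure of graph} and in the remark on $2$-stratifolds the lemma is applied to strata of topological stratified pseudomanifolds, which carry no smooth structure. So the ``diffeomorphic'' formulation of the cited exercise does not literally apply there. Conversely, drop the parenthetical ``equivalently homeomorphic, once a smooth structure is fixed''. That homeomorphic smooth $1$-manifolds are diffeomorphic is essentially the content of the smooth statement, so you are right to handle the smooth case separately by bump-function interpolation.

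A few points to tighten:
\begin{itemize}
\item The Hausdorff argument by itself only shows that each end of each component of $U\cap V$ reaches an end of $U$ or an end of $V$. To conclude that each component uses exactly one end of $U$ and one end of $V$, which Case~1 needs, you must invoke $U\not\subset V$ and $V\not\subset U$ explicitly.
\item In Case~2, compactness alone does not give $U\cup V\cong S^1$. Pick one point in each component of $U\cap V$, and write $U\cup V$ as a closed arc of $U$ and a closed arc of $V$ meeting exactly at those two points. This gives compactness and the homeomorphism to $S^1$ at once.
\item In the noncompact case you need not make the images exhaust $\mathbb{R}$; with exact extensions this fails if one end of the $W_k$ stops growing. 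The union of the nested image intervals is an open interval, which is already homeomorphic to $\mathbb{R}$.
\item In the smooth interpolation on an overlap, choose the additive constant in the $V$-coordinate large enough that the term involving the bump function's derivative is nonnegative. Otherwise monotonicity can fail.
\end{itemize}
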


The next lemma summarizes several basic properties of top strata.

\begin{lem}\label{properties of top stratum}
Let $Q$ be a connected, compact topological stratified pseudomanifold with
filtration
\[
Q = Q_N \supsetneq Q_{N-1} \supset \cdots \supsetneq \emptyset.
\]
Then the following statements hold.

\begin{enumerate}
\item[(1)]
For any stratum $R \subset Q_k \setminus Q_{k-1}$ with $k < N$, there exists
a top stratum $S \subset Q_N \setminus Q_{N-1}$ such that
$
R \subset \overline{S}.
$

\item[(2)]
For any top stratum $S \subset Q_N \setminus Q_{N-1}$,

\begin{enumerate}
\item[(a)]
$S$ is open in $Q$;

\item[(b)]
$S$ is non-compact, equivalently, not closed in $Q$;

\item[(c)]
each connected component of $\overline{S} \setminus S$ contains a stratum
of lower dimension than $S$.
\end{enumerate}
\end{enumerate}
\end{lem}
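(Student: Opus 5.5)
The plan is to prove the four assertions in the order (2)(a), (1), (2)(b), (2)(c). Each step uses the local cone charts of Definition~\ref{def of pseudomanifold}, the density of $\mathring{Q}$, and Lemma~\ref{order from frontier condition} (the Frontier Condition, Proposition~\ref{frontier condition}).

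For (2)(a), note that $Q_{N-1}$ is closed, so $\mathring{Q}=Q\setminus Q_{N-1}$ is open in $Q$. By condition 1 of Definition~\ref{def of pseudomanifold}, $\mathring{Q}$ is an $N$-manifold, hence locally connected. Its connected components are therefore open in $\mathring{Q}$, and so open in $Q$.

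For (1), take $p\in R$ and a chart $U_p\cong O_p\times\cone(L_p)$ with $O_p\subset\mathbb{R}^k$. Since $k<N$, the link $L_p$ has dimension $N-k-1\ge 0$ and is nonempty; indeed, if $L_p=\emptyset$ then $U_p\cong O_p$ would lie in $Q_k$, contradicting the density of $\mathring{Q}$. Density also gives a point of $\mathring{Q}$ in every neighborhood of $p$. Since $Q$ has only finitely many strata near $p$ (via the compact link, or simply by passing to a sequence), some top stratum $S$ has $p\in\overline{S}$. The Frontier Condition then upgrades $R\cap\overline{S}\neq\emptyset$ to $R\subset\overline{S}$.

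For (2)(b), first observe that if $Q_{N-1}=\emptyset$ there are no lower strata; the filtration $Q_N\supsetneq Q_{N-1}\supset\cdots\supsetneq\emptyset$ in the statement forces $Q_{N-1}\neq\emptyset$. Suppose $S$ were closed in $Q$. By (2)(a) it is also open, so connectedness of $Q$ gives $S=Q$, contradicting $Q_{N-1}\neq\emptyset$. Hence $S$ is not closed. Since $Q$ is compact Hausdorff, a subset is compact if and only if it is closed, which gives the claimed equivalence.

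For (2)(c), let $C$ be a connected component of $\overline{S}\setminus S$. This set is nonempty by (2)(b), and it is contained in $Q_{N-1}$ because distinct top strata are disjoint open sets, so no point of another top stratum lies in $\overline{S}$. By Lemma~\ref{order from frontier condition}, $\overline{S}\setminus S$ is a union of strata $R\subset\overline{S}$ with $R\neq S$, all of dimension $<N$. Each such stratum is connected, so it lies entirely in one component. Any point of $C$ lies in some such stratum $R$, and then $R\subset C$.

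The main subtlety is in (1): deducing from density that a \emph{single} top stratum accumulates at $p$. Here I would use that $\overline{\mathring{Q}}=Q$ and that $\overline{\mathring{Q}}$ is the union of the closures of the finitely many top strata. Finiteness holds because $Q$ is compact and each point has a chart neighborhood meeting only finitely many strata, as the link is compact and inductively has finitely many strata.
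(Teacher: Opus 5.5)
Your proposal is correct and follows the paper's proof essentially step for step: density plus the Frontier Condition for (1), openness of the components of $Q\setminus Q_{N-1}$ for (2)(a), the clopen-versus-connectedness argument for (2)(b), and Lemma~\ref{order from frontier condition} with connectedness of strata for (2)(c). Your additions justify points the paper leaves implicit:
\begin{itemize}
\item finiteness of strata near $p$, which yields a single top stratum $S$ with $p\in\overline{S}$ in (1);
\item local connectedness of $\mathring{Q}$ in (2)(a), which really comes from the charts of condition~3 (where the link is empty) rather than from condition~1 alone;
\item the hypothesis $Q_{N-1}\neq\emptyset$ in (2)(b), without which the statement fails;
\item the exclusion of other top strata from $\overline{S}$ in (2)(c).
\end{itemize}
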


\begin{proof}
We first prove (1).
  By Definition~\ref{def of pseudomanifold}, $Q_N \setminus Q_{N-1}$ is dense
  in $Q$. Since $R \subset Q$, we have $R \subset \overline{Q_N \setminus
  Q_{N-1}}$. Since $Q_N \setminus Q_{N-1}$ is the union of all top strata,
  there exists a top stratum $S \subset Q_N \setminus Q_{N-1}$ such that
  $R \cap \overline{S} \neq \emptyset$. Therefore, by the Frontier Condition
  (Lemma~\ref{frontier condition}), it follows that $R \subset \overline{S}$.

We next prove (2)(a).
Since $Q_{N-1}$ is closed in $Q$,
the complement
$
Q_N \setminus Q_{N-1}
$
is open in $Q$.
As $S$ is a connected component of
$
Q_N \setminus Q_{N-1},
$
it follows that $S$ is open in $Q$.
  For (2)(b), suppose for contradiction that $S$ is compact. Since $Q$ is a
  compact Hausdorff space, $S$ is closed. Combined with (2)(a), this means $S$ is
  both open and closed in $Q$, contradicting the connectedness of $Q$.
We finally prove (2)(c).
Since $S$ is not closed,
$
\overline{S} \setminus S \neq \emptyset.
$
Let $C$ be a connected component of
$
\overline{S} \setminus S,
$
and take a point $x \in C$.
Let $R \subset Q$ be the stratum containing $x$.
Since
$
R \cap \overline{S} \supset \{x\} \neq \emptyset,
$
the Frontier Condition implies
$
R \subset \overline{S}.
$
Moreover, since $x \notin S$,
we have $R \neq S$,
and hence $R$ is lower-dimensional than $S$.
By Lemma~\ref{order from frontier condition},
$
R \subset \overline{S} \setminus S.
$
Since $R$ is connected and intersects $C$,
we conclude that
$
R \subset C.
$
\end{proof}

Using the above properties of strata, we now identify the structure of a
compact topological stratified pseudomanifold with $l=0$ and $n=1$ as a finite graph.

\begin{lem}\label{structure of graph}
  Let $Q$ be a connected, compact topological stratified pseudomanifold with
  $l = 0$ and $n = 1$. Then $Q$ has the structure of a finite graph (possibly
  with loops and multiple edges).
\end{lem}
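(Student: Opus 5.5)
The filtration has the form $Q = Q_1 \supsetneq Q_0 \supset \emptyset$, and the plan is to read off the graph structure from it. The vertices will be the points of $Q_0$ and the edges will come from the components of $Q_1\setminus Q_0$.

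\textbf{Vertices.} By Definition~\ref{def of pseudomanifold}, $Q_0$ is a $0$-dimensional manifold, hence discrete. It is closed in the compact space $Q$, so it is compact and therefore a finite set $\{v_1,\dots,v_N\}$. Around each $v_i$ there is a chart $U_{v_i}\cong \cone(L_{v_i})$. Here $L_{v_i}$ is a compact $0$-dimensional pseudomanifold, i.e.\ a finite set of $k_i$ points, so $U_{v_i}$ is an open star consisting of $k_i$ half-open arcs glued at $v_i$. Shrinking the charts, I may assume the $U_{v_i}$ are pairwise disjoint. This uses Hausdorffness and the fact that $\{[v,t] : t<\varepsilon\}$ is again a cone neighbourhood.

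\textbf{Edges.} $Q\setminus Q_0$ is a $1$-manifold, and it is paracompact because it is an open subset of a compact metrizable space. It has finitely many components, since $Q\setminus \bigcup_i U_{v_i}$ is compact and is covered by the disjoint open components. More precisely, each component $S$ meets the compact set $K=Q\setminus\bigcup_i \cone_{1/2}(L_{v_i})$, or else lies in some star; in the latter case it is one of the finitely many open arcs. By Lemma~\ref{classification of 1-manifold}, each component $S$ is homeomorphic to $S^1$ or to $\mathbb{R}$.

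If $S\cong S^1$, then $S$ is compact, which contradicts Lemma~\ref{properties of top stratum}-(2)(b). This uses the connectedness of $Q$, and also requires $Q_0\neq\emptyset$, which holds because the filtration has length $1$ with $Q_l = Q_0$ nonempty. Hence $S\cong\mathbb{R}$.

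Next I analyse the two ends of $S\cong(0,1)$. The set $\overline S\setminus S$ lies in $Q_0$. Fix an end of $S$ and take a sequence $t\to 0$. The corresponding points have an accumulation point by compactness, and that point lies in $Q_0$. Near an accumulation point $v_i$, the set $S\cap U_{v_i}$ is a union of open arcs of the star, and each such arc is a whole component of $U_{v_i}\setminus\{v_i\}$. Since $S$ is connected and $\cong\mathbb{R}$, the end of $S$ must eventually enter one arc and converge to $v_i$ along it. This is the step I expect to be the main obstacle. The work is to argue that the end cannot oscillate between different vertices or different arcs. I will handle it by noting that $S\setminus\bigcup_i \cone_{1/2}(L_{v_i})$ is closed in $Q$, hence compact. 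Therefore a neighbourhood of each end of $S$ lies inside some small star and avoids its vertex. Being connected, that neighbourhood lies in a single arc, and it exhausts the arc near $v_i$.

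\textbf{Conclusion.} Each edge closure $\overline S$ is then the image of a map $[0,1]\to Q$ sending the endpoints to vertices. This map is a homeomorphism onto its image when the two endpoints are distinct, and it identifies the endpoints when they coincide, giving a loop. Every arc of every star belongs to exactly one edge, and there are finitely many arcs in total, namely $\sum_i k_i$. So $Q$ is the quotient of a finite disjoint union of intervals, together with the isolated vertices, by the endpoint identifications. Since $Q$ is compact Hausdorff, the continuous bijection from this finite graph to $Q$ is a homeomorphism. Loops and multiple edges are allowed, and an isolated vertex can only occur if $Q$ is a single point, which is excluded here because the filtration has length $n=1$.
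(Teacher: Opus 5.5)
Your proof is correct, and it takes a more thorough route than the paper's. The paper argues only from the general appendix facts:
\begin{itemize}
\item strata are finite in number by compactness;
\item top strata are non-compact (Lemma~\ref{properties of top stratum}-(2)(b)), which together with Lemma~\ref{classification of 1-manifold} gives each $1$-stratum $\cong\mathbb{R}$;
\item (2)(c) places the frontier of an edge in $Q_0$ (automatic here, since $\overline S\setminus S\subset Q_0$);
\item (1) puts every vertex in some edge closure.
\end{itemize}
It never uses the conical charts at the vertices. So it leaves implicit exactly the point you identify as the main obstacle. That point is that each end of an open edge converges to a single vertex along a single arc, rather than accumulating on several vertices. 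This is what makes $\overline S$ a closed interval or a loop, and $Q$ genuinely homeomorphic to a finite graph.

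Your argument settles this directly. You use disjoint stars $\cone_{1/2}(L_{v_i})$ and the compact set $K$, which shows that $S\cap K$ is compact. You then use the monotonicity of the identification $S\cong(0,1)$ on an arc, and finish with the compact-to-Hausdorff bijection. It also proves, rather than asserts, that there are finitely many edges. A useful by-product is that the valence of $v_i$, counting loops twice, equals $|L_{v_i}|$. The paper tacitly uses this in Section~\ref{sec 4.2}, where the vertex of a loop has a two-point link.

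One small point needs a word. The lemma assumes only that $Q$ is compact Hausdorff, not metrizable, so your justification of paracompactness needs a small adjustment. It is harmless: each component $S$ is the union of the compact set $S\cap K$ and finitely many open arcs, so it is second countable. Alternatively, $Q$ is compact and locally metrizable.
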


\begin{proof}
  Since $Q$ is compact, the number of strata is finite.
  By Lemma~\ref{properties of top stratum}-(2)(b) and
  Definition~\ref{def of pseudomanifold}-1,
  each $1$-stratum is a non-compact $1$-dimensional topological manifold.
  Hence, by Lemma~\ref{classification of 1-manifold},
  each $1$-stratum is homeomorphic to $\mathbb{R}$.
  It remains to show that each connected component of
  $\overline{S} \setminus S$
  for a $1$-stratum $S$
  contains a $0$-stratum,
  and that every $0$-stratum is contained in the closure of some
  $1$-stratum.
  The former follows from
  Lemma~\ref{properties of top stratum}-(2)(c),
  and the latter from
  Lemma~\ref{properties of top stratum}-(1).
\end{proof}

\end{document}